\documentclass[12pt,a4paper]{amsart}
\usepackage{amsmath,amssymb,amsthm}
\usepackage{array}
\usepackage{graphicx}
\usepackage{placeins}
\usepackage{flafter}
\usepackage{natbib}
\usepackage{xcolor}
\usepackage{float}
\definecolor{linknavy}{RGB}{0,51,102}
\usepackage{hyperref}

\newtheorem{theorem}{Theorem}[section]
\newtheorem{corollary}[theorem]{Corollary}
\newtheorem{conjecture}[theorem]{Conjecture}
\theoremstyle{remark}
\newtheorem{remark}{Remark}[section]
\newcommand{\C}{\mathbb{C}}
\newcommand{\R}{\mathbb{R}}

\newcommand{\Z}{\mathbb{Z}}

\hypersetup{
    colorlinks=true,
    linkcolor=linknavy,
    citecolor=linknavy,
    filecolor=linknavy,
    urlcolor=linknavy,
    pdftitle={The Exponent Set: A Third Natural Extension of the Mandelbrot--Julia Framework},
    pdfauthor={Yuchen Brian Shen},
    pdfsubject={Exponent-parameter boundedness in the Mandelbrot--Julia framework},
    pdfkeywords={Mandelbrot set; Julia set; Exponent Set; complex exponent;
    complex-power iteration; boundedness locus; exponent parameter}
}

\title[The Exponent Set]{The Exponent Set: A Third Natural Extension of the Mandelbrot--Julia Framework}
\author{Yuchen Brian Shen}
\date{\today}
\keywords{Mandelbrot set; Julia set; Exponent Set; complex exponent;
complex-power iteration; boundedness locus; exponent parameter}

\begin{document}
\raggedbottom

\begin{abstract}
The Mandelbrot and Julia sets, generated by the quadratic iteration
\(z_{n+1}=z_n^2+c\), are foundational objects in complex
dynamics. We study the three-variable principal-value complex-power iteration
\[
z_{n+1}=z_n^{\,x}+c,
\qquad z_0,c,x\in\C.
\]
The triples producing all-time well-defined and bounded orbits form a locus
\(\mathcal{B}\subset\C^3\). Fixing two coordinates yields three natural
families of coordinate fibers, denoted \(M(z_0,x)\), \(J(c,x)\), and
\(E(z_0,c)\). For \(x=2\), \(M(0,2)\) is the classical
Mandelbrot set, \(J(c,2)\) is the classical filled Julia set, and
\(\partial J(c,2)\) is the classical Julia set.

We focus on the Exponent Set, or E-Set, obtained by fixing $(z_0,c)$ and
varying the complex exponent $x$. We prove three groups of structural results.
First, for explicit parameter families, including pure-power real and
unit-circle cases and an additive example with nonzero real and imaginary
parts, no finite universal escape radius exists: for every prescribed radius,
one can choose an exponent whose bounded orbit makes a finite excursion beyond
that radius. Second, we construct a boundary point at which an
extended-valued escape-time function is discontinuous for one strict threshold.
Third, we prove a vertical boundedness asymmetry in which the
principal-argument convention enters explicitly.
\end{abstract}

\maketitle

\section{Introduction}

The classical Mandelbrot set is defined by the quadratic iteration
$z_{n+1}=z_n^2+c$. A complex parameter $c$ belongs to the Mandelbrot set
precisely when the orbit starting from $z_0=0$ remains bounded
\cite{mandelbrot1982fractal}. Its boundary is a central object of fractal
geometry and complex dynamics \cite{falconer2003fractal}.

For the same quadratic family, fixing $c$ and varying the initial value $z_0$
produces the filled Julia set: the set of initial conditions with bounded
orbits. The Julia set is its boundary. Thus the Mandelbrot and Julia viewpoints
play different roles. The former is a parameter-plane boundedness locus, while
the latter is a dynamical-plane boundary arising from an initial-condition
boundedness locus.

Generalized Mandelbrot and Julia-type constructions modify the underlying
iteration. Huang studied global structure for higher-degree maps
\cite{huang1991global}. Other work investigated maps of the form
$z\mapsto z^\alpha+c$ with real exponents \cite{gujar1992fractal}, negative
exponents \cite{dhurandhar1993analysis}, inverse exponent maps
\cite{shirriff1993investigation}, and rational exponents
\cite{liu2013fractal}. Chen and Zhu considered complex-valued exponents in
maps of the form $z\mapsto z^w+c$ and formulated related Mandelbrot--Julia
conjectures \cite{chen1998m}; Liu et al.\ studied fractal properties and
symmetries of generalized Mandelbrot sets with complex exponents
\cite{liu2024fractal}.

These studies motivate treating the exponent as a genuine coordinate of the
full iterative family. We fix $(z_0,c)$ and analyze the resulting $x$-fibers,
restricting our claims to the explicit families treated below.

We study
\[
z_{n+1}=z_n^{\,x}+c,
\qquad z_0,c,x\in\C,
\]
using the principal-value complex-power convention specified in
Section~\ref{sec:definitions-framework}. Here $z_0$ specifies the initial condition, $c$ the additive parameter, and $x$ the complex exponent. 
The triples producing all-time well-defined and bounded orbits form a locus
$\mathcal{B}\subset\C^3$. Fixing two coordinates produces three families of
coordinate fibers: a $c$-plane boundedness locus, a $z_0$-plane
initial-condition boundedness locus, and an $x$-plane exponent-parameter
boundedness locus. We denote these by $M(z_0,x)$, $J(c,x)$, and $E(z_0,c)$,
respectively.

For $x=2$, this framework contains the classical objects: $M(0,2)$ is the
Mandelbrot set, $J(c,2)$ is the filled Julia set, and
$\partial J(c,2)$ is the Julia set. Fixing $(z_0,c)$ and varying $x$ yields the Exponent Set $E(z_0,c)$, the
third natural coordinate fiber of the Mandelbrot--Julia boundedness framework.

The central object of this paper is the Exponent Set $E(z_0,c)$. We prove the
failure of universal escape radii in explicit parameter families, a
threshold-sensitive discontinuity of an extended-valued escape-time function,
and a principal-argument-dependent vertical asymmetry in boundedness. The
numerical section records only finite-time outcomes and states the separate
all-time conjectures explicitly.

\section{Definitions and Framework}
\label{sec:definitions-framework}

\subsection{Principal-Value Complex Powers}

Throughout this paper, for $z\in\C\setminus\{0\}$ we use the single-valued
principal-value logarithm
\[
\log z=\ln|z|+i\operatorname{Arg}(z),
\qquad
\operatorname{Arg}(z)\in(-\pi,\pi].
\]
This is a single-valued principal-value convention on
$\C\setminus\{0\}$ whose jump discontinuity lies along the negative real
axis; it is not a globally holomorphic logarithm branch on
$\C\setminus\{0\}$.

We also consider alternative logarithm branches defined on specified cut
domains. For $\alpha\in\R$, let
\[
D_\alpha
=
\C\setminus\{re^{i\alpha}:r\ge0\},
\]
and let $\operatorname{Arg}_\alpha(z)\in(\alpha-2\pi,\alpha)$ denote the
continuous argument on $D_\alpha$. For each $k\in\Z$, define
\[
\log_{\alpha,k}z
=
\ln|z|+i\operatorname{Arg}_\alpha(z)+2\pi i k,
\qquad z\in D_\alpha.
\]
The associated complex power is $\exp(x\log_{\alpha,k}z)$. Any comparison
with this branch applies only while every nonzero base in the orbit remains in
$D_\alpha$; it is not a globally defined logarithm on $\C\setminus\{0\}$.
For $z\ne 0$ and $x\in\C$, the complex power $z^x$ is defined by
\[
z^x=\exp(x\log(z)).
\]

The point $z=0$ requires a separate convention because $\log(0)$ is undefined.
We use the following convention throughout the paper:
\begin{enumerate}
    \item If $z=0$ and $\operatorname{Re}(x)>0$, define $0^x=0$.
    \item If $z=0$ and $\operatorname{Re}(x)\le 0$, then $0^x$ is left
    undefined. Any orbit that reaches $0$ at such an exponent is excluded
    from the corresponding boundedness set.
\end{enumerate}

Under the principal-argument convention, points on the negative real axis have
argument $\pi$; that is, $\operatorname{Arg}(-a)=\pi$ for $a>0$. This choice makes the
iteration single-valued and deterministic, but it also introduces
discontinuity effects along the negative real axis that become important in
the dynamics of the Exponent Set.

\subsection{The Three Coordinate-Fiber Families and the Exponent Set}

For each triple $(z_0,c,x)\in\C^3$, let $\{z_n\}$ denote the maximal orbit
generated recursively by
\[
z_{n+1}=z_n^{\,x}+c
\]
under the convention above: the recursion continues for as long as the next
iterate is defined and terminates otherwise. Define the full boundedness locus
\[
\mathcal{B}
=
\left\{(z_0,c,x)\in\C^3:
\begin{array}{l}
z_n\text{ is defined for every }n\ge0,\\
\displaystyle\sup_{n\ge0}|z_n|<\infty
\end{array}
\right\}.
\]
Fixing two variables produces three families of coordinate fibers.

\begin{enumerate}
    \item \textbf{The $c$-plane.}
    For fixed $z_0,x\in\C$, define
    \[
    M(z_0,x)=\{c\in\C:(z_0,c,x)\in\mathcal{B}\}.
    \]
    This is a Mandelbrot-type parameter locus. In particular, $M(0,2)$ is the
    classical Mandelbrot set.

    \item \textbf{The $z_0$-plane.}
    For fixed $c,x\in\C$, define
    \[
    J(c,x)=\{z_0\in\C:(z_0,c,x)\in\mathcal{B}\}.
    \]
    This is a filled Julia-type boundedness locus. Its boundary
    $\partial J(c,x)$ is the corresponding Julia-type boundary. For $x=2$,
    $J(c,2)$ is the classical filled Julia set, while $\partial J(c,2)$ is the
    classical Julia set. Outside the polynomial setting, this terminology does
    not assert the full structure of ordinary holomorphic Julia theory.

    \item \textbf{The $x$-plane.}
    For fixed $z_0,c\in\C$, define the \emph{Exponent Set}, or \emph{E-Set}, by
    \[
    E(z_0,c)=\{x\in\C:(z_0,c,x)\in\mathcal{B}\}.
    \]
    Equivalently, $x\in E(z_0,c)$ exactly when the orbit is defined at every
    iteration and remains bounded.
\end{enumerate}

The Exponent Set is therefore the exponent-coordinate fiber of the same
boundedness condition that produces the Mandelbrot-type and filled Julia-type
loci. The remainder of the paper focuses on $E(z_0,c)$.

\section{Theorems of the E-Set}
This section organizes six theorems and one corollary around three structural
phenomena of the Exponent Set: failure of a universal escape radius in
explicit parameter classes, threshold-sensitive discontinuity of an
extended-valued escape-time function, and principal-argument-induced directional
asymmetry in boundedness.

\subsection{Failure of Universal Escape Radii}

The first group of results shows that the classical escape-radius principle
does not extend directly to the exponent-parameter setting. In the classical quadratic Mandelbrot family, crossing a sufficiently large
radius certifies escape. In the explicit parameter classes considered below, this can
fail even for bounded orbits: for every prescribed finite radius, one can
choose an exponent, depending on that radius, whose orbit exceeds the radius
while remaining bounded.

We first prove this phenomenon for the pure-power slice $c=0$. The unit-circle
case gives a simple exact-collapse construction. The positive and negative real
cases then show that the failure persists for every real initial value except
the degenerate cases $z_0=0$ and $z_0=1$. Finally, we give an additive example
in which $z_0$ and $c$ have nonzero real and imaginary parts, showing that the
phenomenon is not restricted to the pure-power slice.

\noindent\textbf{Universal Escape Radius.}
Fix $z_0,c\in\C$. A finite number $r>0$ is called a
\emph{universal escape radius} for $E(z_0,c)$ if, for every exponent
$x\in\C$ for which the orbit is well-defined for all $n\ge 0$,
\[
(\exists n\ge 0:\; |z_n|>r)
\quad\Longrightarrow\quad
x\notin E(z_0,c),
\]
where the sequence $\{z_n\}$ is generated by
$z_{n+1}=z_n^{\,x}+c$ with initial value $z_0$.

The adjective \emph{universal} means that $r$ is independent of the exponent:
for the fixed pair $(z_0,c)$, the same number must apply across the entire
complex exponent plane to every orbit that is well-defined for all time.
Equivalently, every $x\in E(z_0,c)$ must satisfy $|z_n|\le r$ for all
$n\ge0$. Failure of such a global uniform radius does not rule out a bound on
a fixed compact exponent viewport, a bound for a restricted exponent family,
or a useful finite-time threshold in a numerical computation.

\begin{theorem}[Absence of a Universal Escape Radius on the Unit Circle]
\label{thm:no-universal-radius-unit-circle}
Let $c=0$ and let $z_0\in\C$ satisfy
\[
|z_0|=1,
\qquad
z_0\ne 1.
\]
Then the Exponent Set $E(z_0,0)$ admits no finite universal escape radius.

More precisely, for every $r>0$, there exists an exponent $x\in E(z_0,0)$
such that the orbit generated by
\[
z_{n+1}=z_n^{\,x}
\]
satisfies $|z_n|>r$ for some $n\ge 0$, while the orbit remains bounded.
\end{theorem}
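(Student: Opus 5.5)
The plan is an \emph{exact-collapse} construction. Choose a purely imaginary exponent $x$ so that the first iterate $z_1=z_0^{\,x}$ has very large modulus, while the second iterate lands exactly on $1$. The point $1$ is fixed for every exponent under the principal-value convention, since $\log 1=0$ gives $1^x=\exp(x\cdot 0)=1$. The orbit is then $z_0,z_1,1,1,1,\dots$: it is bounded by $\max(1,|z_1|)$ and never reaches $0$, so it is defined for all time. Because $|z_1|$ can be made as large as we like, no finite $r$ can serve as a universal escape radius.

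\textbf{Step 1.} Write $z_0=e^{i\theta}$ with $\theta=\operatorname{Arg}(z_0)\in(-\pi,\pi]$. The hypotheses $|z_0|=1$ and $z_0\ne 1$ give exactly $\theta\neq 0$, so $\log z_0=i\theta$. Take $x=ib$ with $b\in\R$. Then
\[
z_1=\exp(ib\cdot i\theta)=e^{-\theta b},
\]
which is a positive real number, so $|z_1|=e^{-\theta b}$.

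\textbf{Step 2.} Since $z_1$ is a positive real, $\operatorname{Arg}(z_1)=0$ and $\log z_1=-\theta b$. Hence
\[
z_2=\exp(ib\cdot(-\theta b))=\exp(-i\theta b^2).
\]
We impose $\theta b^2=2\pi k$ for a nonzero integer $k$ with the same sign as $\theta$. That is, we take $|b|=\sqrt{2\pi |k|/|\theta|}$ and choose the sign of $b$ opposite to that of $\theta$. Then $z_2=e^{-2\pi i k}=1$ and $-\theta b=|\theta||b|>0$, so
\[
|z_1|=e^{\sqrt{2\pi|k||\theta|}}.
\]

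\textbf{Step 3.} Given $r>0$, choose $|k|$ large enough that $\sqrt{2\pi|k||\theta|}>\ln r$; this is possible because $|\theta|>0$. Induction from $z_2=1$ gives $z_n=1$ for all $n\ge 2$. Every iterate is nonzero, so the orbit is defined for all $n$, and $\sup_n|z_n|=|z_1|<\infty$. Thus $x=ib\in E(z_0,0)$ while $|z_1|>r$, contradicting the universality of $r$.

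There is no real obstacle here. The only point requiring care is Step 2, where we must check that the principal argument of $z_1$ is exactly $0$ and not some other representative. This holds because $z_1$ is a positive real, and it is precisely why the real part of $x$ is set to $0$. The degenerate case $\theta=\pi$ (that is, $z_0=-1$) needs no separate treatment, since $\operatorname{Arg}(-1)=\pi$ is covered by the same formulas.
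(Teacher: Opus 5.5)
Your proposal is correct and is essentially the paper's own proof. Your exponent $x=ib$, with $|b|=\sqrt{2\pi|k|/|\theta|}$ and $b$ of sign opposite to $\theta$, is exactly the paper's $x_k=-iM_k/\theta$ with $M_k=\sqrt{2\pi k|\theta|}$. It produces the same exact-collapse orbit $z_0,\ e^{M_k},\ 1,\ 1,\dots$, justified by the same check that $\operatorname{Arg}(z_1)=0$ for the positive real $z_1$, with $z_0=-1$ handled uniformly.
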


\begin{proof}
Since $|z_0|=1$ and $z_0\ne 1$, there exists
\[
\theta=\operatorname{Arg}(z_0)\in(-\pi,\pi]
\]
with $\theta\ne 0$ such that
\[
z_0=e^{i\theta}.
\]
By the principal-argument convention,
\[
\log(z_0)=i\theta.
\]
The case $z_0=-1$ corresponds to $\theta=\pi$, so it is included in this
argument.

Let $k\ge 1$ be an integer and define
\[
M_k=\sqrt{2\pi k|\theta|}.
\]
Now define
\[
x_k=\frac{M_k}{i\theta}.
\]
Since $\theta\ne 0$, this exponent is well-defined. Equivalently,
\[
x_k=-\frac{iM_k}{\theta},
\]
so $x_k$ is purely imaginary.

We first compute the first iterate. Using $\log(z_0)=i\theta$, we obtain
\[
z_1
=
z_0^{\,x_k}
=
\exp(x_k\log(z_0))
=
\exp\left(\frac{M_k}{i\theta}i\theta\right)
=
\exp(M_k).
\]
Thus
\[
|z_1|=e^{M_k}.
\]
Since $M_k\to+\infty$ as $k\to\infty$, for every $r>0$ we can choose $k$
large enough such that
\[
|z_1|=e^{M_k}>r.
\]
Therefore the orbit exceeds the radius $r$ at the first iteration step.

It remains to prove that the same orbit remains bounded. Since
\[
z_1=e^{M_k}>0,
\]
the principal-value logarithm of $z_1$ agrees with the real natural logarithm:
\[
\log(z_1)=\ln(z_1)=M_k.
\]
Therefore
\[
z_2
=
z_1^{\,x_k}
=
\exp(x_k\log(z_1))
=
\exp\left(\frac{M_k}{i\theta}M_k\right)
=
\exp\left(\frac{M_k^2}{i\theta}\right).
\]
Since $1/i=-i$, this becomes
\[
z_2
=
\exp\left(-i\frac{M_k^2}{\theta}\right).
\]
By definition,
\[
M_k^2=2\pi k|\theta|.
\]
Hence
\[
-\frac{M_k^2}{\theta}
=
-2\pi k\frac{|\theta|}{\theta}.
\]

If $\theta>0$, then $|\theta|/\theta=1$, so
\[
-\frac{M_k^2}{\theta}=-2\pi k.
\]
Thus
\[
z_2=\exp(-2\pi i k)=1.
\]

If $\theta<0$, then $|\theta|/\theta=-1$, so
\[
-\frac{M_k^2}{\theta}=2\pi k.
\]
Thus
\[
z_2=\exp(2\pi i k)=1.
\]

Therefore, in all cases,
\[
z_2=1.
\]
Now, using the principal-value logarithm, $\log(1)=0$. Hence
\[
z_3
=
1^{x_k}
=
\exp(x_k\log(1))
=
\exp(0)
=
1.
\]
By induction,
\[
z_n=1
\qquad\text{for all } n\ge 2.
\]
Therefore the orbit is
\[
z_0,\quad e^{M_k},\quad 1,\quad 1,\quad 1,\ldots
\]
and is bounded by
\[
B_k=\max\{1,e^{M_k}\}<\infty.
\]
All iterates in this constructed orbit are nonzero, so the orbit is
well-defined for every $n\ge 0$. Hence
\[
x_k\in E(z_0,0).
\]

Since for every $r>0$ we can choose $k$ such that the bounded orbit
corresponding to $x_k$ satisfies $|z_1|>r$, no finite number $r$ can serve as
a universal escape radius for $E(z_0,0)$.
\end{proof}

\medskip

\begin{theorem}[Absence of a Universal Escape Radius for Positive Real Initial Values]
\label{thm:no-universal-radius-positive-real}
Let $c=0$ and let $z_0\in\R$ satisfy
\[
z_0>0,
\qquad
z_0\ne 1.
\]
Then the Exponent Set $E(z_0,0)$ admits no finite universal escape radius.

More precisely, for every $r>0$, there exists an exponent $x\in E(z_0,0)$
such that the orbit generated by
\[
z_{n+1}=z_n^{\,x}
\]
satisfies $|z_n|>r$ for some $n\ge 0$, while the orbit remains bounded.
\end{theorem}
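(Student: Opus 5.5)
The plan is to mimic the three-step orbit $z_0,\ z_1,\ 1,\ 1,\dots$ from Theorem~\ref{thm:no-universal-radius-unit-circle}. Here, however, $\log z_0=L:=\ln z_0$ is real and nonzero, so a real or purely imaginary exponent cannot work. If $x$ is real and $z_1=e^{M}$ is real, then $z_2=\exp(M^2/L)$ is again a large real number, not $1$. I will therefore take a genuinely complex exponent and let $z_1$ sit off the real axis, inside the principal strip.

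Write the target value as $z_1=e^{w}$ with $w=M+i\varphi$, where $M>0$ and $\varphi\in(0,\pi]$. Choose an integer $m\ge1$ and set
\[
x=\frac{w+2\pi i m}{L}.
\]
Then $z_1=\exp(xL)=e^{w}$, so $|z_1|=e^{M}$. Since $\operatorname{Arg}(z_1)=\varphi\in(-\pi,\pi]$, the principal logarithm is exactly $\log z_1=w$. Hence $z_2=\exp(xw)$, and we want $xw=2\pi i k$ for an integer $k$. Setting $s=\varphi+2\pi m$ and expanding $(M+i\varphi)(M+is)/L=2\pi i k$ gives two real conditions:
\[
M^2=\varphi\,(\varphi+2\pi m),\qquad M\,(2\varphi+2\pi m)=2\pi k L .
\]
I will use the first condition to define $M=M(\varphi)=\sqrt{\varphi(\varphi+2\pi m)}$. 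The second then becomes $g_m(\varphi)=2\pi kL$, where $g_m(\varphi)=\sqrt{\varphi(\varphi+2\pi m)}\,(2\varphi+2\pi m)$ is continuous and strictly increasing on $(0,\pi]$.

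The main obstacle is to solve this with $M$ large, so that $z_1$ really exceeds $r$. If $k$ is kept fixed, the solution has $\varphi\to0$ and hence $M\to0$, which is useless. The remedy is to force $\varphi$ into $[\pi/2,\pi]$. The gap $g_m(\pi)-g_m(\pi/2)$ grows like a constant times $m^{3/2}$, so for $m$ large it exceeds $2\pi|L|$. Then some integer multiple $2\pi|L|\,|k|$ lies in $[g_m(\pi/2),g_m(\pi)]$. Take $k$ to have the sign of $L$, so that $2\pi kL=2\pi|L|\,|k|>0$. The intermediate value theorem then gives $\varphi\in[\pi/2,\pi]$ with $g_m(\varphi)=2\pi kL$, and for this solution
\[
M\ge\sqrt{\tfrac{\pi}{2}\cdot 2\pi m}=\pi\sqrt{m}\to\infty .
\]
So for any $r>0$ we pick $m$ with $e^{\pi\sqrt m}>r$.

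Finally I will verify the orbit. We have $z_1=e^{M+i\varphi}$ with $|z_1|>r$. Then $z_2=\exp(2\pi i k)=1$, and since $\log 1=0$, we get $z_n=1$ for all $n\ge2$. Every iterate is nonzero, so the orbit is defined for all time, and it is bounded by $\max\{z_0,e^{M},1\}$. Thus $x\in E(z_0,0)$ while $|z_1|>r$, so no finite universal escape radius exists. Two points need checking along the way. First, $\varphi\in(0,\pi]$ ensures the principal-branch identity $\log z_1=w$, and $\varphi=\pi$ causes no trouble. Second, $L\ne0$ because $z_0\ne1$, which is exactly where the hypothesis is used.
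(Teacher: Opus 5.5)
Your proposal is correct and is essentially the paper's proof: your exponent $x=(M+i(\varphi+2\pi m))/L$ with $M^2=\varphi(\varphi+2\pi m)$ is the paper's $x=(M+iT)/L$ with $T=\theta+2\pi p$, and your $g_m$ is $2\pi|L|$ times the paper's $H_p$, so choosing a multiple of $2\pi|L|$ in $[g_m(\pi/2),g_m(\pi)]$ corresponds to the paper's choice $H_p(\theta)=\lceil H_p(\pi/2)\rceil$ (your $k$ is the paper's $q=m|L|/L$). The only cosmetic difference is that you obtain the radius bound from the estimate $M\ge\pi\sqrt m$, whereas the paper imposes $\sqrt{\tfrac{\pi}{2}(\tfrac{\pi}{2}+2\pi p)}>\max\{0,\ln r\}$ directly.
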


\begin{proof}
Let
\[
L=\ln(z_0).
\]
Since $z_0>0$ and $z_0\ne 1$, we have
\[
L\in\R\setminus\{0\}.
\]
Moreover, because $z_0$ is positive real, the principal-value logarithm satisfies
\[
\log(z_0)=L.
\]

Fix $r>0$. We will construct an exponent $x$ such that the orbit satisfies
\[
z_0\mapsto z_1\mapsto 1\mapsto 1\mapsto \cdots
\]
and such that $|z_1|>r$.

For each integer $p\ge 1$, define
\[
H_p(\theta)
=
\frac{\sqrt{\theta(\theta+2\pi p)}(2\theta+2\pi p)}
{2\pi |L|}
\]
for
\[
\theta\in[\pi/2,\pi].
\]
The function $H_p$ is continuous on $[\pi/2,\pi]$ because
$\theta(\theta+2\pi p)>0$ on this interval, and products, quotients by
nonzero constants, and square-roots of positive continuous functions are
continuous.

At the endpoints, we have
\[
H_p(\pi)
=
\frac{\pi(p+1)\sqrt{2p+1}}{|L|}
\]
and
\[
H_p(\pi/2)
=
\frac{\pi(2p+1)\sqrt{4p+1}}{4|L|}.
\]
Moreover,
\[
\frac{H_p(\pi)}{p^{3/2}}
\to
\frac{\pi\sqrt{2}}{|L|},
\qquad
\frac{H_p(\pi/2)}{p^{3/2}}
\to
\frac{\pi}{|L|}
\]
as $p\to\infty$. Since $\sqrt{2}>1$, it follows that
\[
H_p(\pi)-H_p(\pi/2)\to+\infty
\qquad\text{as }p\to\infty.
\]
Also, for every $\theta\in[\pi/2,\pi]$,
\[
\sqrt{\theta(\theta+2\pi p)}
\ge
\sqrt{\frac{\pi}{2}(\frac{\pi}{2}+2\pi p)}
\to+\infty
\qquad\text{as }p\to\infty.
\]
Therefore we may choose $p$ sufficiently large such that
\[
H_p(\pi)-H_p(\pi/2)>1
\]
and
\[
\sqrt{\frac{\pi}{2}(\frac{\pi}{2}+2\pi p)}
>
\max\{0,\ln(r)\}.
\]

Define
\[
m=\left\lceil H_p(\pi/2)\right\rceil.
\]
Because $H_p(\pi)-H_p(\pi/2)>1$ and both endpoint values are positive, we
have
\[
1\le m\le H_p(\pi).
\]
By the intermediate value theorem, there exists
\[
\theta\in[\pi/2,\pi]
\]
such that
\[
H_p(\theta)=m.
\]

Now define
\[
T=\theta+2\pi p
\]
and
\[
M=\sqrt{\theta T}
=
\sqrt{\theta(\theta+2\pi p)}.
\]
By the choice of $p$ and because $\theta\in[\pi/2,\pi]$, we have
\[
M>\max\{0,\ln(r)\}.
\]
In particular, $M>\ln(r)$ if $r\ge 1$, while $M>0$ if $0<r<1$. Hence in both
cases,
\[
e^M>r.
\]

Finally, define
\[
x=\frac{M+iT}{L}.
\]
This is well-defined because $L\ne 0$.

We now compute the orbit. Since $\log(z_0)=L$, the first iterate is
\[
z_1
=
z_0^{\,x}
=
\exp(x\log(z_0))
=
\exp(xL)
=
\exp(M+iT).
\]
Because $T=\theta+2\pi p$, this gives
\[
z_1=e^M e^{i\theta}.
\]
Therefore
\[
|z_1|=e^M>r.
\]

It remains to prove that the orbit is bounded. Since
\[
\theta\in[\pi/2,\pi]\subset(-\pi,\pi],
\]
the argument $\theta$ lies in the principal range. Therefore the principal
logarithm of $z_1=e^M e^{i\theta}$ is
\[
\log(z_1)
=
\ln(|z_1|)+i\operatorname{Arg}(z_1)
=
M+i\theta.
\]
Thus
\[
z_2
=
z_1^{\,x}
=
\exp(x\log(z_1))
=
\exp\left(\frac{M+iT}{L}(M+i\theta)\right).
\]
Expanding the product gives
\[
(M+iT)(M+i\theta)
=
M^2-T\theta+iM(T+\theta).
\]
By the definition $M^2=\theta T$, the real part vanishes:
\[
M^2-T\theta=0.
\]
Therefore
\[
x\log(z_1)
=
i\frac{M(T+\theta)}{L}.
\]

By the definition of $H_p$ and the choice $H_p(\theta)=m$, we have
\[
m
=
H_p(\theta)
=
\frac{\sqrt{\theta(\theta+2\pi p)}(2\theta+2\pi p)}
{2\pi |L|}.
\]
Since $M=\sqrt{\theta(\theta+2\pi p)}$ and $T=\theta+2\pi p$, this becomes
\[
m
=
\frac{M(T+\theta)}{2\pi |L|}.
\]
Equivalently,
\[
M(T+\theta)=2\pi m|L|.
\]
Hence
\[
\frac{M(T+\theta)}{L}
=
2\pi m\frac{|L|}{L}.
\]
Since $L\ne 0$, the number
\[
q=m\frac{|L|}{L}
\]
is an integer, equal to $m$ if $L>0$ and equal to $-m$ if $L<0$. Thus
\[
x\log(z_1)=2\pi i q.
\]
Therefore
\[
z_2=\exp(2\pi i q)=1.
\]

Now, using the principal-value logarithm, $\log(1)=0$. Hence
\[
z_3
=
1^x
=
\exp(x\log(1))
=
\exp(0)
=
1.
\]
By induction,
\[
z_n=1
\qquad\text{for all } n\ge 2.
\]
Thus the orbit is
\[
z_0,\quad z_1,\quad 1,\quad 1,\quad 1,\ldots
\]
and is bounded by
\[
B=\max\{|z_0|,|z_1|,1\}<\infty.
\]
All iterates in this constructed orbit are nonzero, so the orbit is
well-defined for every $n\ge 0$. Therefore
\[
x\in E(z_0,0).
\]

Since for the arbitrary radius $r>0$ we have constructed an exponent
$x\in E(z_0,0)$ whose orbit satisfies $|z_1|>r$, no finite number $r$ can
serve as a universal escape radius for $E(z_0,0)$.
\end{proof}

\begin{remark}[Positive real case]
Because $\log(z_0)=\ln(z_0)$ is real, the imaginary part of $x$ supplies the
winding needed to make $z_1$ arbitrarily large while enforcing $z_2=1$.
\end{remark}

\medskip

\begin{theorem}[Absence of a Universal Escape Radius for Negative Real Initial Values]
\label{thm:no-universal-radius-negative-real}
Let $c=0$ and let $z_0\in\R$ satisfy
\[
z_0<0,
\qquad
z_0\ne -1.
\]
Then the Exponent Set $E(z_0,0)$ admits no finite universal escape radius.

More precisely, for every $r>0$, there exists an exponent $x\in E(z_0,0)$
such that the orbit generated by
\[
z_{n+1}=z_n^{\,x}
\]
satisfies $|z_n|>r$ for some $n\ge 0$, while the orbit remains bounded.
\end{theorem}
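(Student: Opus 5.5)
The plan is to reuse the collapse mechanism from Theorem~\ref{thm:no-universal-radius-positive-real}: construct $x$ so that the orbit is $z_0\mapsto z_1\mapsto 1\mapsto 1\mapsto\cdots$ with $|z_1|>r$. The difference is that now $\log(z_0)=L+i\pi$ with $L=\ln|z_0|$, and $L\neq 0$ because $z_0\ne -1$. Since $\log(z_0)$ is no longer real, the naive choice $x=(M+iT)/(L+i\pi)$ makes $x\log(z_1)$ equal to $iM(T+\theta)$ divided by a non-real number. That need not lie in $2\pi i\Z$, so the two constraints must be set up afresh.

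\emph{Algebraic reformulation.} Write $w=a+ib$ for the intended principal logarithm of $z_1$, with $b\in(-\pi,\pi]$, and set $\ell=L+i\pi$. Collapse holds provided two conditions are met. The first is $x\ell=w+2\pi i p$ for some $p\in\Z$, which gives $z_1=e^{w}$ with $\log(z_1)=w$. The second is $xw=2\pi i q$ for some $q\in\Z$, which gives $z_2=1$. Eliminating $x=(w+2\pi i p)/\ell$ leaves the single complex equation
\[
w(w+2\pi i p)=2\pi i q\,\ell .
\]
Put $t=b+\pi p$ and split into real and imaginary parts. This gives
\[
a^2-t^2+\pi^2p^2=-2\pi^2 q,
\qquad
a t=\pi q L .
\]
We need a solution with $a>\max\{0,\ln r\}$, $t\in(\pi p-\pi,\pi p+\pi]$, and $p,q\in\Z$. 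Once one exists, the tail is exactly as in Theorem~\ref{thm:no-universal-radius-positive-real}. From $z_1=e^{a}e^{ib}$ with $b$ in the principal range we get $\log(z_1)=w$. Then $z_2=\exp(2\pi i q)=1$ and $\log(1)=0$, so $z_n=1$ for all $n\ge2$. All iterates are nonzero, hence $x\in E(z_0,0)$ with $|z_1|=e^a>r$.

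\emph{Solving with one continuous and two integer parameters.} I will take $p$ large and choose the sign of $p$ (equivalently of $t$) so that $q$ and $tL$ have the same sign, which makes $a=\pi qL/t>0$. Concretely, I substitute $a=\pi qL/t$ into the first equation and view it as a quadratic in $q$ for fixed $t$:
\[
\frac{\pi^2L^2}{t^2}q^2+2\pi^2 q+(\pi^2p^2-t^2)=0 .
\]
If I restrict $b\in[\pi/2,\pi]$ (or $[-\pi,-\pi/2]$ in the mirrored case) so that $t^2-\pi^2p^2\approx 2\pi^2 p b$ is positive and of order $p$, this quadratic has a real root $q=Q_p(b)$ depending continuously on $b$. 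The signs have to be tracked carefully here: the root must have the sign that makes $a>0$, possibly after replacing $p$ by $-p$. To leading order $Q_p(b)\approx p b/\pi$, up to lower-order corrections involving $L$.

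As in the positive case, I will show two asymptotic facts. First, $Q_p(\pi)-Q_p(\pi/2)\to\infty$ (in fact it grows like $p/2$), so by the intermediate value theorem some $b$ in the interval makes $Q_p(b)=m\in\Z$. Second, the corresponding $a=\pi mL/t\approx |L|\,b/\pi\cdot(\text{const})$ must tend to $+\infty$.

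\emph{Main obstacle.} That second point is the step I expect to be the real difficulty. With $q\sim p$ and $t\sim \pi p$, the leading-order estimate gives $a$ merely bounded, so I must choose the scaling differently. The fix I would try first is to let $q$ be of order $p^{3/2}$, mirroring $M\sim p$ versus $m\sim p^{3/2}$ in the previous proof. That means taking the other root branch, where $q^2\pi^2L^2/t^2\approx 2\pi^2|q|$, i.e.\ $|q|\approx 2t^2/L^2\sim p^2$. Then $a=\pi|qL|/|t|\sim p$, which tends to infinity. On that branch I will redo the IVT argument: check continuity of the root in $b$, check that it sweeps an interval of length greater than $1$ as $b$ ranges over $[\pi/2,\pi]$, and confirm that the resulting $a$ exceeds $\max\{0,\ln r\}$ for large $p$.
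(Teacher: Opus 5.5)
Your reformulation and the branch you finally settle on are correct, and this is essentially the paper's construction with the elimination done the other way. The paper eliminates $q$ rather than $a$: it obtains $a$ (its $M$) as the positive root of $M^2-B_NM-C_N=0$ with $B_N=2\pi S_N/|L|$, so $a>B_N\sim p$ and $|q|=G_N=MS_N/(\pi|L|)\sim p^2$ --- exactly your second branch (the $p^2$ you derive, not the $p^{3/2}$ you first mention) --- and then takes $p=\sigma N$ with $\sigma$ of sign opposite to $L$ and applies the intermediate value theorem to $G_N$.

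One small repair: your mirrored interval $[-\pi,-\pi/2]$ must exclude $b=-\pi$, which is not a principal argument; if the intermediate value theorem landed there, $\log z_1$ would be $a+i\pi\ne w$ and the collapse would fail. The paper avoids this by using $u\in[\pi/2,3\pi/4]$ and $b=\sigma u$, and any such closed subinterval still gives a sweep tending to infinity.
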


\begin{proof}
Write
\[
z_0=-\rho,
\qquad
\rho>0.
\]
Since $z_0\ne -1$, we have $\rho\ne 1$. Define
\[
a=\ln(\rho).
\]
Then
\[
a\in\R\setminus\{0\}.
\]
By the principal-argument convention, the principal argument of any negative
real number is $\pi$. Therefore
\[
\log(z_0)=a+i\pi.
\]
Here $\pi$, not $-\pi$, is the principal argument of $z_0<0$.

Let
\[
A=|a|.
\]
Then $A>0$. Define
\[
\sigma=-\frac{a}{A}.
\]
Then $\sigma\in\{-1,1\}$ and
\[
\sigma A=-a.
\]

Fix $r>0$. We will construct an exponent $x$ such that the orbit satisfies
\[
z_0\mapsto z_1\mapsto 1\mapsto 1\mapsto \cdots
\]
and such that $|z_1|>r$.

For each integer $N\ge 1$ and each
\[
u\in[\pi/2,3\pi/4],
\]
define
\[
S_N(u)=u+\pi N,
\]
\[
B_N(u)=\frac{2\pi S_N(u)}{A},
\]
and
\[
C_N(u)=u(u+2\pi N).
\]
Now define
\[
M_N(u)
=
\frac{B_N(u)+\sqrt{B_N(u)^2+4C_N(u)}}{2}.
\]
Then $M_N(u)>0$, and $M_N(u)$ is the positive root of
\[
M^2-B_N(u)M-C_N(u)=0.
\]
Equivalently,
\[
M_N(u)^2-C_N(u)=B_N(u)M_N(u).
\]

Define
\[
G_N(u)=\frac{M_N(u)S_N(u)}{\pi A}.
\]
The displayed radical formula shows that $M_N$ is continuously
differentiable on $[\pi/2,3\pi/4]$. Consequently, $G_N$ is continuously
differentiable there as well.

We first show that, for some large $N$, the function $G_N$ takes an integer
value. Since $B_N(u)>0$ and $C_N(u)>0$, we have
\[
M_N(u)>B_N(u)
\]
for all $u\in[\pi/2,3\pi/4]$.

Moreover, differentiating the equation
\[
M_N(u)^2-B_N(u)M_N(u)-C_N(u)=0
\]
gives
\[
(2M_N(u)-B_N(u))M_N'(u)=B_N'(u)M_N(u)+C_N'(u).
\]
Here
\[
2M_N(u)-B_N(u)=\sqrt{B_N(u)^2+4C_N(u)}>0,
\]
while $B_N'(u)>0$ and $C_N'(u)>0$. Hence
\[
M_N'(u)>0.
\]
Therefore
\[
G_N'(u)
=
\frac{M_N'(u)S_N(u)+M_N(u)}{\pi A}
>
\frac{M_N(u)}{\pi A}
>
\frac{B_N(u)}{\pi A}
=
\frac{2S_N(u)}{A^2}.
\]
It follows that
\[
G_N(3\pi/4)-G_N(\pi/2)
>
\int_{\pi/2}^{3\pi/4}\frac{2(u+\pi N)}{A^2}\,du.
\]
The right-hand side tends to $+\infty$ as $N\to\infty$. Hence we may choose
$N$ sufficiently large such that
\[
G_N(3\pi/4)-G_N(\pi/2)>1.
\]
We also choose $N$ sufficiently large so that
\[
M_N(u)>\max\{0,\ln(r)\}
\]
for every $u\in[\pi/2,3\pi/4]$. This is possible because
\[
M_N(u)>B_N(u)=\frac{2\pi(u+\pi N)}{A}\to+\infty
\]
uniformly for $u\in[\pi/2,3\pi/4]$.

Define
\[
m=\left\lceil G_N(\pi/2)\right\rceil.
\]
Because $G_N(3\pi/4)-G_N(\pi/2)>1$ and both endpoint values are positive, we
have
\[
1\le m\le G_N(3\pi/4).
\]
By the intermediate value theorem, there exists
\[
u\in[\pi/2,3\pi/4]
\]
such that
\[
G_N(u)=m.
\]

Now set
\[
M=M_N(u),
\qquad
S=S_N(u),
\]
and define
\[
\theta=\sigma u,
\qquad
p=\sigma N,
\qquad
T=\theta+2\pi p.
\]
Since $\sigma=\pm 1$, we have $p\in\Z$. Also,
\[
T=\sigma(u+2\pi N).
\]
Because $u\in[\pi/2,3\pi/4]$, we have
\[
\theta\in[-3\pi/4,-\pi/2]\cup[\pi/2,3\pi/4]\subset(-\pi,\pi].
\]
Thus $\theta$ lies in the principal argument range.

Finally, define
\[
x=\frac{M+iT}{a+i\pi}.
\]
This is well-defined because $a+i\pi\ne 0$.

We now compute the orbit. Since $\log(z_0)=a+i\pi$, the first iterate is
\[
z_1
=
z_0^{\,x}
=
\exp(x\log(z_0))
=
\exp(M+iT).
\]
Since $T=\theta+2\pi p$ and $p\in\Z$, this becomes
\[
z_1=e^M e^{i\theta}.
\]
Therefore
\[
|z_1|=e^M.
\]
By the choice of $N$, we have $M>\max\{0,\ln(r)\}$. Hence
\[
|z_1|=e^M>r.
\]

It remains to prove that the orbit is bounded. Since $\theta\in(-\pi,\pi]$,
the principal-value logarithm of $z_1$ is
\[
\log(z_1)=M+i\theta.
\]
Thus
\[
z_2
=
z_1^{\,x}
=
\exp(x\log(z_1))
=
\exp\left(\frac{M+iT}{a+i\pi}(M+i\theta)\right).
\]

We now compute the exponent explicitly. Since
\[
T=\sigma(u+2\pi N)
\qquad\text{and}\qquad
\theta=\sigma u,
\]
we have
\[
T\theta
=
\sigma(u+2\pi N)\sigma u
=
u(u+2\pi N)
=
C_N(u).
\]
Also,
\[
T+\theta
=
\sigma(u+2\pi N)+\sigma u
=
2\sigma(u+\pi N)
=
2\sigma S.
\]

Since $M=M_N(u)$ is the positive root of
\[
M^2-B_N(u)M-C_N(u)=0,
\]
we have
\[
M^2-C_N(u)=B_N(u)M.
\]
Using $T\theta=C_N(u)$, this gives
\[
M^2-T\theta=B_N(u)M.
\]
Because
\[
B_N(u)=\frac{2\pi S}{A},
\]
we obtain
\[
M^2-T\theta
=
\frac{2\pi SM}{A}.
\]
On the other hand, from
\[
G_N(u)=\frac{M_N(u)S_N(u)}{\pi A}=m
\]
and the definitions $M=M_N(u)$ and $S=S_N(u)$, we get
\[
MS=\pi A m.
\]
Therefore
\[
M^2-T\theta
=
\frac{2\pi MS}{A}
=
2\pi^2m.
\]

Similarly, since $T+\theta=2\sigma S$, we have
\[
M(T+\theta)
=
2\sigma MS.
\]
Using $MS=\pi A m$, this becomes
\[
M(T+\theta)
=
2\sigma\pi A m.
\]
Since $\sigma A=-a$, we obtain
\[
M(T+\theta)
=
-2\pi a m.
\]

Therefore
\[
\begin{aligned}
(M+iT)(M+i\theta)
&=
M^2-T\theta+iM(T+\theta) \\
&=
2\pi^2m-2\pi i a m.
\end{aligned}
\]
The last expression factors as
\[
2\pi^2m-2\pi i a m
=
-2\pi i m(a+i\pi).
\]
Hence
\[
(M+iT)(M+i\theta)
=
-2\pi i m(a+i\pi).
\]
Since
\[
x=\frac{M+iT}{a+i\pi}
\qquad\text{and}\qquad
\log(z_1)=M+i\theta,
\]
we conclude that
\[
x\log(z_1)
=
\frac{M+iT}{a+i\pi}(M+i\theta)
=
-2\pi i m.
\]
Therefore
\[
z_2
=
\exp(x\log(z_1))
=
\exp(-2\pi i m)
=
1.
\]

Now, using the principal-value logarithm, $\log(1)=0$. Hence
\[
z_3
=
1^x
=
\exp(x\log(1))
=
\exp(0)
=
1.
\]
By induction,
\[
z_n=1
\qquad\text{for all }n\ge 2.
\]
Thus the orbit is
\[
z_0,\quad z_1,\quad 1,\quad 1,\quad 1,\ldots
\]
and is bounded by
\[
B=\max\{|z_0|,|z_1|,1\}<\infty.
\]
All iterates in this constructed orbit are nonzero, so the orbit is
well-defined for every $n\ge 0$. Therefore
\[
x\in E(z_0,0).
\]

Since for the arbitrary radius $r>0$ we have constructed an exponent
$x\in E(z_0,0)$ whose orbit satisfies $|z_1|>r$, no finite number $r$ can
serve as a universal escape radius for $E(z_0,0)$.
\end{proof}

\begin{remark}[Negative real case]
Here $\log(z_0)=\ln|z_0|+i\pi$. The exponent compensates for the additional
$i\pi$ term; after that adjustment, the same large-excursion/exact-collapse
mechanism applies.
\end{remark}

\medskip

\begin{corollary}[Absence of a Universal Escape Radius on the Real Axis]
\label{cor:no-universal-radius-real-axis}
Let $c=0$ and let $z_0\in\R$ satisfy
\[
z_0\ne 0,
\qquad
z_0\ne 1.
\]
Then the Exponent Set $E(z_0,0)$ admits no finite universal escape radius.

More precisely, for every $r>0$, there exists an exponent $x\in E(z_0,0)$
such that the orbit generated by
\[
z_{n+1}=z_n^{\,x}
\]
satisfies $|z_n|>r$ for some $n\ge 0$, while the orbit remains bounded.
\end{corollary}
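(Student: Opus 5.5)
The corollary follows by a case split on the sign of $z_0$, invoking the three preceding theorems. Let $z_0\in\R$ with $z_0\ne0$ and $z_0\ne1$. Exactly one of three cases holds: $z_0>0$; $z_0<0$ with $z_0\ne-1$; or $z_0=-1$.

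\emph{Case 1: $z_0>0$.} Since $z_0\ne1$, the hypotheses of Theorem~\ref{thm:no-universal-radius-positive-real} are satisfied. That theorem gives, for every $r>0$, an exponent $x\in E(z_0,0)$ whose bounded orbit has $|z_1|>r$.

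\emph{Case 2: $z_0<0$ and $z_0\ne-1$.} This is exactly the setting of Theorem~\ref{thm:no-universal-radius-negative-real}, which yields the same conclusion.

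\emph{Case 3: $z_0=-1$.} This is the one value excluded by Theorem~\ref{thm:no-universal-radius-negative-real}. There $\ln\rho=0$, so the construction via $A=|a|$ breaks down. The case is covered instead by Theorem~\ref{thm:no-universal-radius-unit-circle}: $|z_0|=1$ and $z_0\ne1$. As that proof notes explicitly, $z_0=-1$ corresponds to $\theta=\operatorname{Arg}(-1)=\pi$ under the principal convention. The exponent $x_k=-iM_k/\pi$ with $M_k=\sqrt{2\pi^2k}$ produces the orbit $-1,\,e^{M_k},\,1,\,1,\dots$, which is bounded and nonzero throughout, with $|z_1|=e^{M_k}$ arbitrarily large.

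In each case, for every $r>0$ we obtain some $x\in E(z_0,0)$ and some $n$ with $|z_n|>r$. Hence no $r$ is a universal escape radius. The only point needing care is that the three hypothesis sets exactly cover $\R\setminus\{0,1\}$. In particular, the gap $z_0=-1$ left by the negative-real theorem must be filled by the unit-circle theorem rather than overlooked. Beyond this bookkeeping there is no genuine obstacle, since all the analytic work is contained in the earlier theorems.
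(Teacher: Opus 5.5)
Your proposal is correct and matches the paper's proof. It uses the same three-way case split: $z_0>0$ via Theorem~\ref{thm:no-universal-radius-positive-real}, $z_0<0$ with $z_0\neq-1$ via Theorem~\ref{thm:no-universal-radius-negative-real}, and $z_0=-1$ via Theorem~\ref{thm:no-universal-radius-unit-circle}, and your explicit check of the orbit $-1,\,e^{M_k},\,1,\,1,\ldots$ is accurate but not needed.
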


\begin{proof}
Let $z_0\in\R$ with $z_0\ne 0$ and $z_0\ne 1$.

If $z_0>0$, then the result follows from
Theorem~\ref{thm:no-universal-radius-positive-real}.

If $z_0=-1$, then $|z_0|=1$ and $z_0\ne 1$, so the result follows from
Theorem~\ref{thm:no-universal-radius-unit-circle}.

If $z_0<0$ and $z_0\ne -1$, then the result follows from
Theorem~\ref{thm:no-universal-radius-negative-real}.

These cases exhaust all real values $z_0\ne 0,1$. Therefore
$E(z_0,0)$ admits no finite universal escape radius for every
$z_0\in\R\setminus\{0,1\}$.
\end{proof}

\begin{remark}[Exceptional initial values]
For $c=0$, the excluded values are degenerate: $z_0=1$ is fixed, while every
well-defined orbit starting at $z_0=0$ remains at $0$.
\end{remark}

\smallskip

\begin{theorem}[An Additive Example with Nonzero Real and Imaginary Parts]
\label{thm:no-universal-radius-nonzero-components}
There exist parameters $z_0,c\in\C$ with
\[
\operatorname{Re}(z_0)\ne 0,
\qquad
\operatorname{Im}(z_0)\ne 0,
\qquad
\operatorname{Re}(c)\ne 0,
\qquad
\operatorname{Im}(c)\ne 0,
\]
for which the Exponent Set $E(z_0,c)$ admits no finite universal escape
radius.

In particular, for
\[
z_0=c=\frac{1+i}{10},
\]
the following holds: for every $r>0$, there exists an exponent
$x\in E(z_0,c)$ such that the orbit generated by
\[
z_{n+1}=z_n^{\,x}+c
\]
satisfies $|z_n|>r$ for some $n\ge 0$, while the orbit remains bounded.
\end{theorem}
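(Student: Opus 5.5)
The plan is to replace the ``collapse to $1$'' mechanism of Theorems~\ref{thm:no-universal-radius-positive-real} and~\ref{thm:no-universal-radius-negative-real} by a ``collapse to a fixed point'' mechanism. Collapse to $1$ does not survive $c\ne 0$, because $1$ is no longer fixed. Write $w=z_0=c=(1+i)/10$ and $L_0=\log w=\ln(\sqrt2/10)+i\pi/4$, so that $\operatorname{Re}L_0<0$ and $\operatorname{Im}L_0>0$. I look for an exponent $x$ and a large point $z_1$ with
\[
z_1=w^{x}+c,\qquad z_1^{x}+c=z_1 .
\]
Then the orbit is $z_0,z_1,z_1,z_1,\ldots$. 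It is bounded, and all its iterates are nonzero once $|z_1|>0$, so $x\in E(z_0,c)$ as soon as $|z_1|>r$. Both conditions say that $z_0^{x}=z_1^{x}=z_1-c$. Given $z_1$, the equality $z_1^x=w^x$ holds exactly when $x(\log z_1-L_0)=2\pi i m$ for some $m\in\Z$. So for each integer $m$ I set
\[
x_m(z_1)=\frac{2\pi i m}{\log z_1-L_0},
\]
and the whole problem reduces to solving the single complex equation
\[
F_m(z_1):=c+\exp\bigl(x_m(z_1)L_0\bigr)-z_1=0
\]
with $|z_1|>r$.

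Next, a heuristic size count tells me where to look. Write $z_1=e^{\lambda+i\varphi}$ with $\varphi$ kept in a closed subinterval of $(-\pi/2,\pi/2)$, so that $\log z_1=\lambda+i\varphi$ is holomorphic in a neighborhood and avoids the cut. For large $\lambda$ we have $x_m L_0\approx 2\pi i m L_0/\lambda$, whose real part is $-2\pi m\operatorname{Im}L_0/\lambda$. Taking $m=-n$ with $n>0$ makes this positive. Matching $|\exp(x_mL_0)|\approx|z_1|=e^{\lambda}$ forces $n\approx \lambda^2/(2\pi\operatorname{Im}L_0)$, and the argument must match modulo $2\pi$. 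So I would take $n$ large and set $\lambda_n\approx\sqrt{2\pi n\operatorname{Im}L_0}$, which tends to $+\infty$. To leading order, $\log$ of the equation is then the holomorphic equation
\[
\frac{2\pi i m L_0}{\lambda+i\varphi-L_0}=\lambda+i\varphi+2\pi i k+O(e^{-\lambda}),
\]
to be solved in $\zeta=\lambda+i\varphi$ with $k$ an auxiliary integer. Clearing denominators turns this into an approximately quadratic equation in $\zeta$, exactly as in the quadratic for $M_N$ in Theorem~\ref{thm:no-universal-radius-negative-real}. It has an explicit approximate root $\zeta_{n,k}$, and for large $n$ I can choose $k$ so that $\operatorname{Im}\zeta_{n,k}$ lands in the allowed $\varphi$-window. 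This is the analogue of choosing $m=\lceil\cdot\rceil$ there.

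The main obstacle is turning the approximate root into an exact one, since the additive $c$ destroys the exact algebraic cancellation used in the pure-power theorems. I would apply Rouché's theorem on a small disk around $\zeta_{n,k}$, comparing the holomorphic function $G(\zeta)=x_m(e^\zeta)L_0-\log(e^\zeta-c)-2\pi i k$ with its explicit quadratic model. The perturbation $\log(e^\zeta-c)-\zeta=O(e^{-\lambda})$ is exponentially small, while the model has a simple root whose derivative is of polynomial size in $\lambda$. Hence for $n$ large the perturbation is dominated on the boundary circle, and $G$ has an exact zero $\zeta^*$. A backup plan is a winding-number or Brouwer argument on a rectangle in $(\lambda,\varphi)$, mimicking the intermediate-value arguments earlier in the paper.

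Finally, set $z_1=e^{\zeta^*}$ and $x=x_m(z_1)$, and verify the orbit directly. Since $\operatorname{Im}\zeta^*\in(-\pi,\pi]$, we have $\log z_1=\zeta^*$. Then $w^x=\exp(xL_0)=z_1-c$ gives $z_1=w^x+c$. Next, $z_1^x=\exp(x\zeta^*)=\exp(xL_0+2\pi i m)=z_1-c$ gives $z_2=z_1$, and by induction $z_n=z_1$ for all $n\ge1$. Since $|z_1|=e^{\operatorname{Re}\zeta^*}>r$ for $n$ large, the same conclusion as in Theorem~\ref{thm:no-universal-radius-unit-circle} follows. Note also that $\operatorname{Re}w$, $\operatorname{Im}w$, $\operatorname{Re}c$ and $\operatorname{Im}c$ are all nonzero.
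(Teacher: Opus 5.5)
Your argument is correct, and it takes a genuinely different route from the paper's. The paper never solves an equation. It takes the negative integer exponent $x=-N$, for which the principal power is simply $1/z^N$, so the branch convention plays no role. Using only the triangle inequality, it shows that the disk $A=\{|z-s|\le\rho/2\}$ is mapped into $\{|z|\ge R_N\}$ and that region is mapped back into $A$. The orbit therefore alternates between the two regions, with $|z_1|\ge R_N>r$. You instead carry the exact-collapse idea of Theorems~\ref{thm:no-universal-radius-unit-circle}--\ref{thm:no-universal-radius-negative-real} over to $c\neq0$, collapsing onto a fixed point of $z\mapsto z^x+c$. The price is a Rouch\'e step plus bookkeeping for $m$, $k$ and the argument window.

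The one claim you leave unchecked is that some integer $k$ puts $\operatorname{Im}\zeta_{n,k}$ in the window; it does hold. Write $L_0=a+ib$, $K=2\pi k$ and $m=-n$, so the model is $(\zeta+iK)(\zeta-L_0)=-2\pi i nL_0$. For a suitable real $K$ this model has the exact real root $\lambda=a+\sqrt{2\pi nb-b^2}$. Since $d\zeta/dK\approx -i/(2-ia/b)$, rounding $K/2\pi$ to an integer moves $\operatorname{Im}\zeta$ by at most about $2\pi/(4+a^2/b^2)\approx 0.62$ here. Also $|G_0'|\approx|2-ia/b|\ge 2$, so a disk of fixed radius works in Rouch\'e. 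When you write $\log(e^\zeta-c)$, take it to mean $\zeta+\operatorname{Log}(1-ce^{-\zeta})$.

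What each route buys: the paper's proof is short, elementary and branch-free. It does, however, use that $z_0$ lies in a disk about $c$ contained in $\{0<|z|<1\}$. Yours is heavier, but $c$ enters only through the exponentially small term $\operatorname{Log}(1-ce^{-\zeta})$, and you use only $\operatorname{Im}L_0>0$, so it is not tied to this diagonal pair. For instance, with $z_0=i$ and $k=0$ the model root is $i\pi/4+\pi\sqrt{n-1/16}$, and the same Rouch\'e estimate works for every fixed $c$. This would settle Conjecture~\ref{conj:broad-no-universal-escape}, and in fact for all $c$.
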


\begin{proof}
Let
\[
s=\frac{1+i}{10}.
\]
We prove the result for
\[
z_0=c=s.
\]
Then
\[
\operatorname{Re}(z_0)=\operatorname{Im}(z_0)
=
\operatorname{Re}(c)=\operatorname{Im}(c)
=
\frac{1}{10},
\]
so all four listed real and imaginary parts are nonzero.

Let
\[
\rho=|s|=\frac{\sqrt{2}}{10}
\]
and choose
\[
\eta=\frac{\rho}{2}.
\]
Then
\[
\rho+\eta=\frac{3\rho}{2}=\frac{3\sqrt{2}}{20}<1,
\qquad
\rho-\eta=\frac{\rho}{2}>0.
\]

Fix $r>0$. We will construct a negative integer exponent $x=-N$ such that
the orbit makes a large first excursion but remains trapped in a bounded
two-region cycle.

Let $N\ge 1$ be an integer to be chosen later and set
\[
x=-N.
\]
Since $x=-N$ is a negative integer, the principal-value power agrees with
the ordinary reciprocal integer power on $\C\setminus\{0\}$. Indeed, if
$z=\lambda e^{i\theta}$ with $\lambda=|z|>0$ and
$\theta=\operatorname{Arg}(z)\in(-\pi,\pi]$, then
\[
\begin{aligned}
z^{\,x}=z^{-N}
&=\exp(-N\log(z)) \\
&=\exp(-N(\ln(\lambda)+i\theta)) \\
&=\lambda^{-N}e^{-iN\theta} \\
&=\frac{1}{z^N}.
\end{aligned}
\]
Thus, as long as the orbit avoids $0$, the iteration becomes
\[
z_{n+1}=z_n^{-N}+s.
\]

Define the closed disk
\[
A=\{z\in\C:\ |z-s|\le \eta\}.
\]
If $z\in A$, then
\[
|z|\le |s|+\eta=\rho+\eta
\]
and
\[
|z|\ge |s|-\eta=\rho-\eta>0.
\]
Therefore every point of $A$ is nonzero.

For this fixed $N$, define
\[
R_N=(\rho+\eta)^{-N}-\rho
\]
and
\[
U_N=(\rho-\eta)^{-N}+\rho.
\]
Since $\rho+\eta<1$, we have
\[
R_N\to+\infty
\qquad\text{as }N\to\infty.
\]
Moreover,
\[
R_N^{-N}\to 0
\qquad\text{as }N\to\infty.
\]
Indeed, $R_N\to+\infty$, so eventually $R_N\ge 2$, and then
\[
0<R_N^{-N}\le 2^{-N}\to0.
\]
Hence we may choose $N$ sufficiently large such that
\[
R_N>\max\{r,1\}
\]
and
\[
R_N^{-N}\le \eta.
\]

We now prove two trapping estimates.

First, suppose that $z\in A$. Then
\[
|z|\le \rho+\eta
\]
and
\[
|z|\ge \rho-\eta.
\]
Therefore
\[
|z^{-N}+s|
\ge
|z|^{-N}-|s|
\ge
(\rho+\eta)^{-N}-\rho
=
R_N,
\]
and also
\[
|z^{-N}+s|
\le
|z|^{-N}+|s|
\le
(\rho-\eta)^{-N}+\rho
=
U_N.
\]
Thus
\[
z\in A
\quad\Longrightarrow\quad
R_N\le |z^{-N}+s|\le U_N.
\]

Second, suppose that
\[
|z|\ge R_N.
\]
Because the function $t\mapsto t^{-N}$ is decreasing on $(0,\infty)$, we have
\[
|z|^{-N}\le R_N^{-N}.
\]
Hence
\[
|z^{-N}+s-s|
=
|z^{-N}|
=
|z|^{-N}
\le
R_N^{-N}
\le
\eta.
\]
Therefore
\[
|z|\ge R_N
\quad\Longrightarrow\quad
z^{-N}+s\in A.
\]

Since the initial value is
\[
z_0=s\in A,
\]
the first trapping estimate gives
\[
R_N\le |z_1|\le U_N.
\]
In particular, because $R_N>r$,
\[
|z_1|>r.
\]

The second trapping estimate then gives
\[
z_2\in A.
\]
Repeating the two implications inductively, we obtain
\[
z_{2j}\in A
\qquad\text{for all }j\ge 0,
\]
and
\[
R_N\le |z_{2j+1}|\le U_N
\qquad\text{for all }j\ge 0.
\]
Therefore the entire orbit is bounded. Indeed,
\[
|z_{2j}|\le \rho+\eta
\]
and
\[
|z_{2j+1}|\le U_N.
\]
Thus the orbit is bounded by
\[
B=\max\{\rho+\eta,U_N\}<\infty.
\]

Moreover, the orbit never reaches $0$. Points in $A$ satisfy
\[
|z|\ge \rho-\eta>0,
\]
and points in the annular region satisfy
\[
|z|\ge R_N>0.
\]
Thus the orbit is well-defined for every $n\ge 0$ under the exponent
$x=-N$.

Hence
\[
x=-N\in E(s,s).
\]
Since for the arbitrary radius $r>0$ we have constructed an exponent
$x\in E(s,s)$ whose orbit satisfies $|z_1|>r$, no finite number $r$ can serve
as a universal escape radius for $E(s,s)$.

Therefore $E(z_0,c)$ admits no finite universal escape radius for the choice
\[
z_0=c=\frac{1+i}{10}.
\]
\end{proof}

\begin{remark}[Scope of the additive example]
Unlike the preceding exact-collapse constructions, this proof uses alternating
trapping regions for one explicit diagonal pair $z_0=c$. It does not establish
persistence under parameter perturbations or a general classification.
\end{remark}

\subsection{Escape-Time and Directional Asymmetry}

The following results exhibit two precise parameter-space effects: a
threshold-sensitive discontinuity of an extended-valued escape-time function,
and opposite boundedness behavior along the two vertical directions from a
boundary point. The first effect uses equality with the chosen threshold and
the growth of nearby real-exponent orbits; the second uses the principal-argument
choice in complex exponentiation.

\begin{theorem}[Strict-Threshold Escape-Time Discontinuity at $x=1$]
\label{thm:boundary-escape-time-instability}
Consider the iterative family
\[
z_{n+1}=z_n^{\,x}+c,\qquad z_0,c,x\in\C,
\]
with the principal-value complex-power convention and the zero convention of
Section~\ref{sec:definitions-framework}. In particular, for $z\ne0$,
\[
z^x=\exp(x\log z),\qquad
\log z=\ln|z|+i\operatorname{Arg}(z),\qquad \operatorname{Arg}(z)\in(-\pi,\pi].
\]
Choose
\[
z_0=e,\qquad c=0,\qquad r=e,\qquad x_0=1.
\]
For these parameters, the following hold:
\begin{enumerate}
    \item $x_0\in\partial E(e,0)$, and the orbit of $x_0$ is bounded.
    \item For every $\delta>0$, there exists $x'\in\R$ with
    \[
    1<x'<1+\delta
    \]
    such that
    \[
    |z_1(x')|>r
    \]
    and
    \[
    z_n(x')\longrightarrow+\infty
    \qquad\text{as }n\to\infty.
    \]
\end{enumerate}
Consequently, for this particular escape threshold $r=e$, consider the
extended-valued escape-time function
\[
T_r:\C\longrightarrow[0,+\infty],
\qquad
T_r(x)=\inf\{n\in\Z_{\ge0}:\ |z_n(x)|>r\},
\]
where the infimum of the empty set is $+\infty$ and $[0,+\infty]$ carries
its usual order topology. Then $T_r$ is discontinuous at $x_0=1$.

\end{theorem}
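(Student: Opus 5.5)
The plan is to compute the real-exponent orbits explicitly. For real $x>0$ and a positive real iterate $z_n>0$, the principal power agrees with the real power, so by induction $z_n(x)=e^{x^n}$ for all $n\ge0$. Indeed, $z_0=e$ and $\log(e^{x^n})=x^n$, so
\[
z_{n+1}=\exp(x\cdot x^n)=e^{x^{n+1}}.
\]
Every iterate is positive, so the orbit is defined for all time.

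At $x_0=1$ this gives the constant orbit $z_n=e$. The orbit is bounded, so $1\in E(e,0)$.

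To see that $1\in\partial E(e,0)$, I would take any $x'>1$. Then $x'^n\to+\infty$, so $z_n(x')=e^{x'^n}\to+\infty$ and $x'\notin E(e,0)$. Since such $x'$ can be taken in $(1,1+\delta)$ for every $\delta>0$, the point $1$ lies in $E(e,0)$ and is a limit of points of its complement. This proves part (1).

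Part (2) uses the same $x'$. We have $|z_1(x')|=e^{x'}>e=r$ because $x'>1$, and $z_n(x')\to+\infty$ as just shown.

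For the discontinuity of $T_r$ with $r=e$, I would compare values near $x_0$:
\begin{itemize}
\item At $x_0=1$, every iterate satisfies $|z_n|=e$, and $e>e$ is false. So no index meets the strict threshold, and $T_e(1)=+\infty$.
\item For $x'\in(1,1+\delta)$, we have $|z_0|=e\not>e$ but $|z_1|>e$. So $T_e(x')=1$.
\end{itemize}
Hence $x_k=1+1/k\to1$ while $T_e(x_k)=1$. This does not converge to $+\infty$ in the order topology of $[0,+\infty]$, because the neighborhood $(1,+\infty]$ of $+\infty$ contains no $T_e(x_k)$. Therefore $T_e$ is discontinuous at $1$.

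The only delicate point is justifying the principal-value identities. Specifically, we need $\log(e^{t})=t$ for real $t$, which holds because $\operatorname{Arg}$ of a positive real is $0$. This is routine, so I expect no real obstacle beyond careful bookkeeping of the strict inequality at the threshold.
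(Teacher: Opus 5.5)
Your proposal is correct and matches the paper's proof. Both use the explicit real-exponent orbit $z_n(x')=e^{(x')^n}$: it is constant at $x=1$ and diverges with $T_e(x')=1$ for $x'>1$, which gives the boundary claim and the discontinuity $T_e(1)=+\infty$ versus $T_e(1+1/k)=1$. Citing the open neighborhood $(1,+\infty]$ of $+\infty$ makes the order-topology step fully explicit.
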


\begin{proof}
Since $c=0$, the iteration becomes
\[
z_{n+1}=z_n^{\,x}.
\]
Because $z_0=e\ne0$ and every value $\exp(x\log z)$ is nonzero, the orbit is
well-defined for every $x\in\C$ and every iteration step.

\smallskip
\noindent\textbf{Step 1: The orbit for $x_0=1$ is bounded.}
For $x_0=1$, we have
\[
z_1(1)=e^1=e.
\]
If $z_n(1)=e$ for some $n\ge0$, then
\[
z_{n+1}(1)=z_n(1)^1=e.
\]
Therefore, by induction,
\[
z_n(1)=e
\qquad\text{for all } n\ge0.
\]
Hence
\[
\sup_{n\ge0}|z_n(1)|=e<\infty,
\]
so
\[
1\in E(e,0).
\]
Moreover, since $|z_n(1)|=e=r$ for every $n\ge0$, the orbit of $x_0=1$
never strictly exceeds the chosen escape threshold $r=e$.

\smallskip
\noindent\textbf{Step 2: Nearby real exponents larger than $1$ escape past $r=e$.}
Let $\delta>0$ be arbitrary, and define
\[
x'=1+\frac{\delta}{2}.
\]
Then
\[
|x'-1|=\frac{\delta}{2}<\delta,
\]
and $x'>1$. Since $z_0=e>0$ and $x'$ is real, all iterates lie on the
positive real axis. Along this orbit, the principal-value logarithm agrees with the
ordinary real logarithm.

The first iterate is
\[
z_1(x')=e^{x'}=e^{1+\delta/2}>e=r.
\]
Thus
\[
|z_1(x')|>r.
\]

We now show that the orbit of $x'$ diverges to $+\infty$. We claim that
\[
z_n(x')=e^{(x')^n}
\qquad\text{for all } n\ge0.
\]
For $n=0$, this is true because
\[
z_0=e=e^{(x')^0}.
\]
Assume the formula holds for some $n\ge0$. Then
\[
z_n(x')=e^{(x')^n}>0,
\]
so
\[
\log(z_n(x'))=\ln(z_n(x'))=(x')^n.
\]
Therefore
\[
z_{n+1}(x')
=
z_n(x')^{\,x'}
=
\exp(x'\log(z_n(x')))
=
\exp(x'(x')^n)
=
e^{(x')^{n+1}}.
\]
Thus the formula holds by induction.

Since $x'>1$, we have
\[
(x')^n\to+\infty
\qquad\text{as } n\to\infty.
\]
Hence
\[
z_n(x')=e^{(x')^n}\to+\infty.
\]
Therefore the orbit of $x'$ is unbounded, and
\[
x'\notin E(e,0).
\]

\smallskip
\noindent\textbf{Step 3: The point $x_0=1$ lies on the boundary of $E(e,0)$.}
Every neighborhood of $x_0=1$ contains $x_0$ itself, and we have already
shown that $x_0\in E(e,0)$. By Step 2, every neighborhood of $x_0=1$ also
contains some $x'\notin E(e,0)$. Therefore every neighborhood of $x_0=1$
intersects both $E(e,0)$ and its complement. Hence
\[
x_0=1\in\partial E(e,0).
\]
For completeness, let $U$ be any neighborhood of $x_0=1$ and choose
$y\in U\cap(0,1)$. The same induction as in Step 2 gives
\[
z_n(y)=e^{y^n}\longrightarrow1,
\]
so $y\in E(e,0)$.

\smallskip
\noindent\textbf{Step 4: The thresholded escape-time function is discontinuous at $x_0=1$.}
For $x_0=1$, the orbit satisfies $|z_n(1)|=e=r$ for every $n\ge0$.
Because the escape condition is strict, namely $|z_n|>r$, the orbit never
escapes past $r=e$. Thus
\[
T_r(1)=+\infty.
\]

Now let
\[
x_k=1+\frac{1}{k}
\qquad(k\ge1).
\]
Then $x_k\to1$. Also,
\[
z_1(x_k)=e^{1+1/k}>e=r,
\]
while
\[
z_0(x_k)=e=r
\]
does not strictly exceed $r$. Therefore
\[
T_r(x_k)=1
\qquad\text{for all } k\ge1.
\]
Thus $x_k\to1$, but
\[
T_r(x_k)=1
\qquad\text{for all } k\ge1,
\]
whereas
\[
T_r(1)=+\infty.
\]
In the stated order topology, the constant sequence $T_r(x_k)=1$ does not
converge to $+\infty=T_r(1)$. Hence $T_r$ is discontinuous at $x_0=1$ for
the chosen threshold $r=e$.
\end{proof}

\begin{remark}[Threshold dependence]
The discontinuity is specific to the strict threshold $r=e$. If $0<r<e$, then
$T_r\equiv0$ because $|z_0|=e$. If $r>e$, continuity of each finite set of
iterates near $x=1$ implies that, for every $N$, one has $T_r(x)>N$ sufficiently
close to $1$; hence $T_r$ is continuous at $1$ in the stated order topology.
\end{remark}

\smallskip

\begin{theorem}[Principal-Argument Vertical Boundedness Asymmetry at $x=2$]
\label{thm:principal-argument-vertical-asymmetry}
Consider the iteration
\[
z_{n+1}=z_n^{\,x}+c,\qquad z_0,c,x\in\C,
\]
with the principal-value complex-power convention and the zero convention of
Section~\ref{sec:definitions-framework}. In particular, for $z\ne0$,
\[
z^x=\exp(x\log z),\qquad
\log z=\ln|z|+i\operatorname{Arg}(z),\qquad \operatorname{Arg}(z)\in(-\pi,\pi].
\]
For
\[
z_0=i,\qquad c=0,\qquad x_0:=2,
\]
the following hold:
\begin{enumerate}
  \item $x_0\in E(i,0)$.
  \item For every $\varepsilon>0$, let
        $x_\pm=2\pm i\varepsilon$, and let
        $\{z_n^\pm\}_{n\ge0}$ denote the corresponding orbits. Then
        $x_+\in E(i,0)$, while $x_-\notin E(i,0)$. In fact,
        $|z_n^+|\to0$ and $|z_n^-|\to+\infty$.
  \item Hence $x_0\in\partial E(i,0)$.
\end{enumerate}
The proof exploits the principal-argument assignment $\operatorname{Arg}(-a)=\pi$ for $a>0$.
\end{theorem}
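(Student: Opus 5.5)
The plan is to work throughout with logarithms of the iterates. Since $c=0$, we have $z_{n+1}=\exp(x\log z_n)$, so no iterate is ever $0$ and every orbit is defined for all time. Writing $a_n=\ln|z_n|$ and $\theta_n=\operatorname{Arg}(z_n)\in(-\pi,\pi]$, we get $\log z_{n+1}\equiv x(a_n+i\theta_n)\pmod{2\pi i}$. Reduction modulo $2\pi i$ changes only the imaginary part, so the real parts satisfy the exact recursion
\[
a_{n+1}=\operatorname{Re}\bigl(x(a_n+i\theta_n)\bigr).
\]
For $x=2\pm i\varepsilon$ this gives $a_{n+1}=2a_n\mp\varepsilon\theta_n$. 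The whole proof reduces to controlling this scalar recursion using only the bound $|\theta_n|\le\pi$, together with the exact value $\theta_1=\pi$ supplied by the principal-argument convention.

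For item (1), compute directly. We have $\log i=i\pi/2$, so $z_1=\exp(i\pi)=-1$. By the convention $\operatorname{Arg}(-1)=\pi$, we get $z_2=\exp(2\pi i)=1$, and then $z_n=1$ for all $n\ge2$, since $\log 1=0$. Thus the orbit is bounded and $2\in E(i,0)$.

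For item (2), first compute
\[
z_1^\pm=\exp\bigl((2\pm i\varepsilon)\,i\pi/2\bigr)=-e^{\mp\varepsilon\pi/2}.
\]
This is a negative real number, so $a_1=\mp\varepsilon\pi/2$ and, crucially, $\theta_1=\pi$ exactly (not $-\pi$). This is the point where the principal argument enters, and it gives
\[
a_2^+=-\varepsilon\pi-\varepsilon\pi=-2\varepsilon\pi,\qquad a_2^-=\varepsilon\pi+\varepsilon\pi=2\varepsilon\pi.
\]
From $n\ge2$ on, $\theta_n$ is no longer known explicitly, so use only $|\theta_n|\le\pi$.
\begin{itemize}
\item For $x_+$, we have $a_{n+1}\le 2a_n+\varepsilon\pi$. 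Setting $b_n=a_n-\varepsilon\pi$ (shifting by the fixed point $-\varepsilon\pi$ of $a\mapsto 2a+\varepsilon\pi$), this becomes $b_{n+1}\le 2b_n$. Since $b_2=-3\varepsilon\pi<0$, induction gives $b_n\le 2^{n-2}b_2\to-\infty$. Hence $|z_n^+|=e^{a_n}\to0$, the orbit is bounded, and $x_+\in E(i,0)$.
\item For $x_-$, symmetrically $a_{n+1}\ge 2a_n-\varepsilon\pi$. Setting $c_n=a_n-\varepsilon\pi$ gives $c_{n+1}\ge 2c_n$, and $c_2=\varepsilon\pi>0$. Hence $c_n\to+\infty$, so $|z_n^-|\to+\infty$ and $x_-\notin E(i,0)$.
\end{itemize}

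Item (3) then follows as in Step 3 of Theorem~\ref{thm:boundary-escape-time-instability}. Every neighborhood of $2$ contains $2\in E(i,0)$ and also some $x_-=2-i\varepsilon\notin E(i,0)$, so $2\in\partial E(i,0)$.

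I expect the main obstacle to be that the arguments $\theta_n$ for $n\ge2$ cannot be tracked explicitly. The key idea for getting around this is to avoid tracking them altogether: the crude bound $|\theta_n|\le\pi$ suffices once the orbit has been pushed strictly past the repelling fixed point $\pm\varepsilon\pi$ of the affine comparison map. Reaching that point requires the exact value $\theta_1=\pi$, since the bound alone would only give $a_2^+\le 0$ and $a_2^-\ge0$, which is not enough. This is precisely why the principal-argument assignment $\operatorname{Arg}(-a)=\pi$ is essential to the asymmetry.
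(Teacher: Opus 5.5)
Your proof is correct and is essentially the paper's proof. Both use the exact log-modulus recursion $\ln|z_{n+1}|=\operatorname{Re}(x\log z_n)$ and the decisive principal value $\operatorname{Arg}(z_1^\pm)=\pi$, which gives $\ln|z_2^\pm|=\mp2\varepsilon\pi$. From then on both use only $\theta_n\in(-\pi,\pi]$ together with an induction; your fixed-point shift even yields doubly exponential decay for $x_+$, where the paper gets geometric decay.

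There is one sign slip in the $x_+$ case. The fixed point of $a\mapsto 2a+\varepsilon\pi$ is $-\varepsilon\pi$, so you should set $b_n=a_n+\varepsilon\pi$. Then $b_{n+1}\le 2b_n$ and $b_2=-\varepsilon\pi<0$, and the induction goes through. With $b_n=a_n-\varepsilon\pi$ as written, you only get $b_{n+1}\le 2b_n+2\varepsilon\pi$, so $b_{n+1}\le 2b_n$ does not follow.
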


\begin{proof}
Since $c=0$, the iteration becomes
\[
z_{n+1}=z_n^{\,x}.
\]

\smallskip
\noindent\textbf{Step 1: boundedness at $x_0=2$.}
For $x=2$, we have
\[
z_1=i^2=-1,\qquad z_2=(-1)^2=1,\qquad z_n=1\quad(n\ge 2).
\]
Thus the orbit is bounded, so $x_0=2\in E(i,0)$.

\smallskip
\noindent\textbf{Step 2: first iterates for $x_\pm$.}
Let $\varepsilon>0$ and define
\[
x_+=2+i\varepsilon,\qquad x_-=2-i\varepsilon.
\]
Since
\[
\log(i)=\frac{i\pi}{2},
\]
we get
\[
x_\pm\log(i)
=(2\pm i\varepsilon)\frac{i\pi}{2}
=i\pi\mp\frac{\varepsilon\pi}{2}.
\]
Therefore
\[
z_1^\pm=i^{x_\pm}
=\exp(x_\pm\log(i))
=\exp(i\pi\mp\varepsilon\pi/2)
=-e^{\mp\varepsilon\pi/2}.
\]
Hence $z_1^+$ and $z_1^-$ both lie on the negative real axis. Although these
points lie on the branch cut, $\pi$ belongs to the principal-argument interval
$(-\pi,\pi]$. Therefore, by the principal-argument convention,
\[
\operatorname{Arg}(z_1^\pm)=\pi.
\]
Moreover,
\[
|z_1^+|=e^{-\varepsilon\pi/2}<1,
\qquad
|z_1^-|=e^{\varepsilon\pi/2}>1.
\]

\smallskip
\noindent\textbf{Step 3: modulus recursion.}
We first note that all iterates considered in this proof are nonzero. Indeed,
$z_0=i\ne 0$, and if $z_n^\pm\ne 0$, then
\[
z_{n+1}^\pm=(z_n^\pm)^{x_\pm}=\exp(x_\pm\log(z_n^\pm))\ne 0.
\]
Thus the claim follows by induction. We may therefore write
\[
z_n^\pm=\rho_n^\pm e^{i\theta_n^\pm},
\qquad
\rho_n^\pm=|z_n^\pm|,
\qquad
\theta_n^\pm=\operatorname{Arg}(z_n^\pm)\in(-\pi,\pi].
\]
Then
\[
\log(z_n^\pm)=\ln(\rho_n^\pm)+i\theta_n^\pm.
\]
For $x_+=2+i\varepsilon$, we obtain
\[
\rho_{n+1}^+
=|(z_n^+)^{x_+}|
=\exp(2\ln(\rho_n^+)-\varepsilon\theta_n^+).
\]
For $x_-=2-i\varepsilon$, we obtain
\[
\rho_{n+1}^-
=|(z_n^-)^{x_-}|
=\exp(2\ln(\rho_n^-)+\varepsilon\theta_n^-).
\]

\smallskip
\noindent\textbf{Step 4: boundedness for $x_+$.}
Since $\theta_n^+\in(-\pi,\pi]$, we have
\[
-\varepsilon\theta_n^+\le \varepsilon\pi.
\]
Therefore
\[
\rho_{n+1}^+
\le
e^{\varepsilon\pi}(\rho_n^+)^2.
\]
Using $\rho_1^+=e^{-\varepsilon\pi/2}$ and $\operatorname{Arg}(z_1^+)=\pi$, the recursion
gives
\[
\rho_2^+
=\exp(2\ln(\rho_1^+)-\varepsilon\pi)
=\exp(-2\varepsilon\pi)
=e^{-2\varepsilon\pi}
<e^{-\varepsilon\pi}.
\]
This value starts the monotone estimate. We now prove by induction that, for
all $n\ge 2$,
\[
\rho_n^+\le e^{-2\varepsilon\pi}.
\]
The base case $n=2$ holds by the computation above. If
$\rho_n^+\le e^{-2\varepsilon\pi}$, then
\[
\rho_{n+1}^+
\le e^{\varepsilon\pi}(\rho_n^+)^2
\le e^{\varepsilon\pi}e^{-2\varepsilon\pi}\rho_n^+
=e^{-\varepsilon\pi}\rho_n^+.
\]
In particular,
\[
\rho_{n+1}^+\le e^{-\varepsilon\pi}\rho_n^+\le \rho_n^+
\le e^{-2\varepsilon\pi}.
\]
Thus the induction closes. Moreover,
\[
\rho_{n+1}^+\le e^{-\varepsilon\pi}\rho_n^+
\qquad(n\ge 2),
\]
so $\rho_n^+\to 0$ geometrically. Hence the orbit for $x_+$ is bounded, and
therefore
\[
x_+\in E(i,0).
\]

\smallskip
\noindent\textbf{Step 5: divergence for $x_-$.}
Since $\theta_n^-\in(-\pi,\pi]$, we have
\[
\varepsilon\theta_n^-\ge -\varepsilon\pi.
\]
Therefore
\[
\rho_{n+1}^-
\ge
e^{-\varepsilon\pi}(\rho_n^-)^2.
\]
Using $\rho_1^-=e^{\varepsilon\pi/2}$ and $\operatorname{Arg}(z_1^-)=\pi$, the recursion
gives
\[
\rho_2^-
=\exp(2\ln(\rho_1^-)+\varepsilon\pi)
=\exp(2\varepsilon\pi)
=e^{2\varepsilon\pi}.
\]
Let
\[
s_n=\ln(\rho_n^-).
\]
Taking logarithms, for $n\ge 2$ we obtain
\[
s_{n+1}\ge 2s_n-\varepsilon\pi.
\]
Since $s_2=2\varepsilon\pi$, we claim that
\[
s_n\ge \varepsilon\pi+2^{n-2}\varepsilon\pi
\qquad(n\ge 2).
\]
The claim is true for $n=2$. If it holds for some $n\ge 2$, then
\[
s_{n+1}
\ge 2s_n-\varepsilon\pi
\ge 2(\varepsilon\pi+2^{n-2}\varepsilon\pi)-\varepsilon\pi
=\varepsilon\pi+2^{n-1}\varepsilon\pi.
\]
Thus the claim follows by induction. Therefore $s_n\to+\infty$, so
\[
\rho_n^-=|z_n^-|\to+\infty.
\]
Hence the orbit for $x_-$ diverges in modulus, and
\[
x_-\notin E(i,0).
\]

\smallskip
\noindent\textbf{Step 6: boundary conclusion.}
Let $U$ be any neighborhood of $x_0=2$. Choose $\varepsilon>0$ small enough
that both $2+i\varepsilon$ and $2-i\varepsilon$ lie in $U$. By the results
above, $2+i\varepsilon\in E(i,0)$ while $2-i\varepsilon\notin E(i,0)$.
Thus every neighborhood of $x_0=2$ contains both points of $E(i,0)$ and points
outside $E(i,0)$. Consequently,
\[
x_0=2\in\partial E(i,0).
\]

The branch-sensitive step is the evaluation of the negative real first
iterates $z_1^\pm$, for which the principal convention gives
$\theta_1^\pm=\pi$. More generally, for
$z_n=\rho_n e^{i\theta_n}$ and $x=a+ib$, the real part of the exponent is
\[
\operatorname{Re}(x\log(z_n))
=a\ln(\rho_n)-b\theta_n.
\]
At $n=1$, the choice $\theta_1^\pm=\pi$ yields the decisive values
$\rho_2^+=e^{-2\varepsilon\pi}$ and
$\rho_2^-=e^{2\varepsilon\pi}$. The global principal-argument range
$\theta_n^\pm\in(-\pi,\pi]$ then supplies the estimates used above. Thus the
principal-argument convention is an explicit input to the directional
boundedness conclusion.
\end{proof}

\medskip

\section{Finite-Time Numerical Illustrations}
\label{sec:numerical-observations}

This section records finite-time computations in the exponent plane. Unless
otherwise stated, the initial value and additive parameter are fixed at
\[
z_0=c=\frac12.
\]
Each image records one of four finite-time outcomes: the first threshold
crossing occurs on an iteration from $1$ through $50$; no crossing occurs
through iteration $50$; the orbit becomes formally undefined before a
crossing; or floating-point evaluation fails before a formal outcome is
obtained. The figures therefore describe finite-time threshold outcomes rather
than exact membership in $E(1/2,1/2)$. In particular, failure to cross a
threshold within $50$ iterations does not imply that the orbit remains bounded
for all time.

\subsection{Computational Setup}

For each sampled exponent $x$, the orbit
\[
z_{n+1}=z_n^{\,x}+c
\]
was evaluated using the principal-argument convention from
Section~\ref{sec:definitions-framework}. For a maximum iteration count
$N_{\max}\in\Z_{\ge0}$, define
\[
\mathcal{W}_{N_{\max}}(z_0,c)
=
\left\{
x\in\C:
z_0,z_1,\ldots,z_{N_{\max}}
\text{ are all defined}
\right\}.
\]
For an escape threshold $r>0$, define the formal finite-time outcome
\[
\mathcal{S}_{r,N_{\max}}
:
\C\longrightarrow
\{0,1,\ldots,N_{\max}\}\cup\{\mathsf{NC},\mathsf{U}\}.
\]
The value is $\mathcal{S}_{r,N_{\max}}(x)=n$ if $z_0,\ldots,z_n$ are
defined, $|z_j|\le r$ for $0\le j<n$, and $|z_n|>r$. It is
$\mathsf{NC}$ if $x\in\mathcal{W}_{N_{\max}}(z_0,c)$ and no
threshold crossing occurs through iteration $N_{\max}$, and it is
$\mathsf{U}$ if the orbit becomes undefined by that cutoff before any
crossing. Thus $\mathsf{NC}$ means no crossing through the cutoff, while
$\mathsf{U}$ means formally undefined.

Floating-point evaluation has one additional outcome, $\mathsf{F}$, for a
numerical failure or nonfinite result that prevents the renderer from assigning
a formal outcome. Every figure in this section uses $N_{\max}=50$. Because
$|z_0|=0.5<r$ for every displayed panel, its numerical crossing labels range
from $1$ through $50$ rather than including $0$.

The same fixed first-crossing scale is used in every panel. For a crossing
iteration $k\in\{1,\ldots,50\}$, the renderer applies Mathematica's
\emph{Rainbow} color map at $((k-1)/49)^{0.4}$. The endpoint $k=50$ therefore
means a genuine first crossing on iteration $50$. The special outcomes are
colored separately: $\mathsf{NC}$ is black, $\mathsf{U}$ is medium gray,
and $\mathsf{F}$ is magenta. Internally these three outcomes are stored as
$-3,-1$, and $-2$, respectively, and are status codes rather than iteration
counts. Equal colors therefore represent equal first-crossing iterations or
equal special outcomes across panels.

The corresponding visual key is shown below.
\begin{center}
\includegraphics[width=0.94\textwidth]{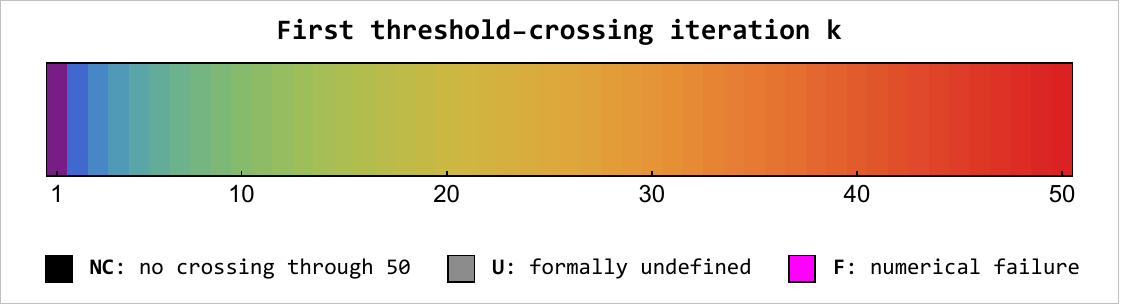}
\end{center}

For $x_c\in\C$ and $L>0$, let
\[
Q(x_c,L)
=
\left\{
x\in\C:
\left|\operatorname{Re}(x-x_c)\right|\le\frac{L}{2},
\quad
\left|\operatorname{Im}(x-x_c)\right|\le\frac{L}{2}
\right\}.
\]
Thus $Q(x_c,L)$ is the square viewport centered at $x_c$ with side length $L$.
The numerical parameters for the six illustrations are listed in
Table~\ref{tab:numerical-parameters}.

\begin{table}[!htbp]
\caption{Parameters for the six finite-time illustrations. Every panel uses
$z_0=c=0.5$, $N_{\max}=50$, and a $2160\times2160$ grid of exponents.}
\label{tab:numerical-parameters}
\centering
\footnotesize
\begin{tabular}{@{}lccc@{}}
\hline
Illustration & Center $x_c$ & Side length $L$ & Threshold $r$ \\
\hline
Threshold comparison (a) & $-2.5$ & $8$ & $5$ \\
Threshold comparison (b) & $-2.5$ & $8$ & $10$ \\
Threshold comparison (c) & $-2.5$ & $8$ & $20$ \\
Threshold comparison (d) & $-2.5$ & $16$ & $100$ \\
Adjacent regions & $-2.1$ & $2$ & $5$ \\
Multi-region zoom & $2.2372+2.2948i$ & $1/64$ & $5$ \\
\hline
\end{tabular}
\end{table}

\noindent\textbf{Implementation and numerical checks.}
The figures were generated in Mathematica~14.3 using double-precision
arithmetic and the strict threshold test
\[
|z_n|^2>r^2.
\]
Each panel samples a $2160\times2160$ grid in the exponent plane. The complete
rendering code, plotting controls, viewport parameters, and color convention
are provided in the accompanying Wolfram Language source file
\texttt{Eset\_Renderer.wl}. As a consistency check, $81$ grid points from each
panel were recomputed on the CPU; all $486$ classifications matched the
accelerated computation.

For a nonzero base, the renderer evaluates the principal argument numerically
using \texttt{atan2}. At an exact zero base, it returns $0$ only when
$\operatorname{Re}(x)>0$ and otherwise returns the formally undefined status
$\mathsf{U}$, in accordance with Section~\ref{sec:definitions-framework}.
Nonfinite arithmetic and underflow from a mathematically nonzero value are
assigned the numerical-failure status $\mathsf{F}$. None of the six
displayed grids produced a $\mathsf{U}$ or $\mathsf{F}$ status.

\subsection{Observation 1: Cross-Panel Variation}

Figure~\ref{fig:escape-threshold-comparison} compares four finite-time
renderings obtained with different thresholds and viewports.

\begin{figure}[H]
    \centering
    \begin{tabular}{@{}c@{\hspace{0.025\textwidth}}c@{}}
        \includegraphics[width=0.4\textwidth]{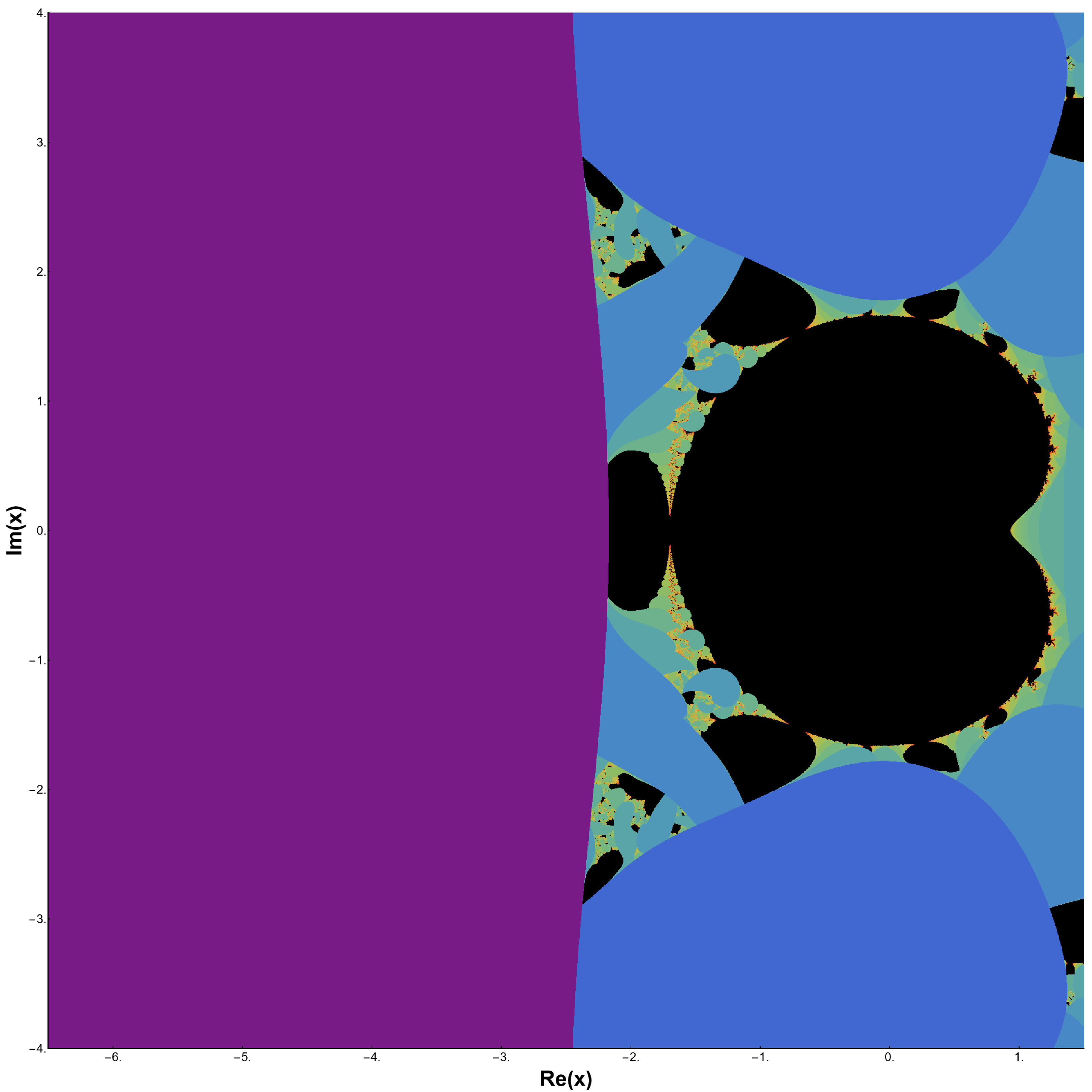} &
        \includegraphics[width=0.4\textwidth]{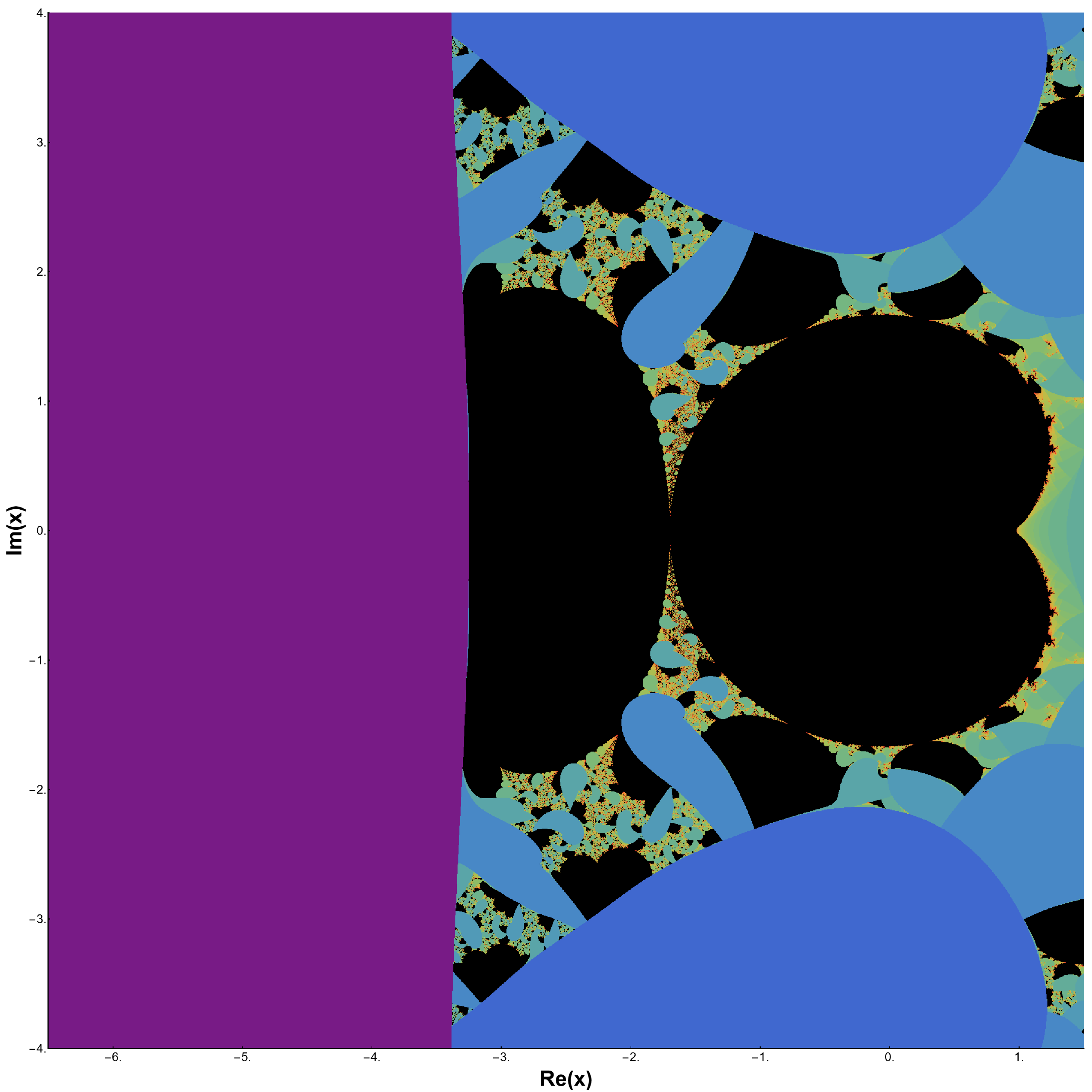} \\
        {\small (a) $r=5$} & {\small (b) $r=10$} \\[0.3em]
        \includegraphics[width=0.4\textwidth]{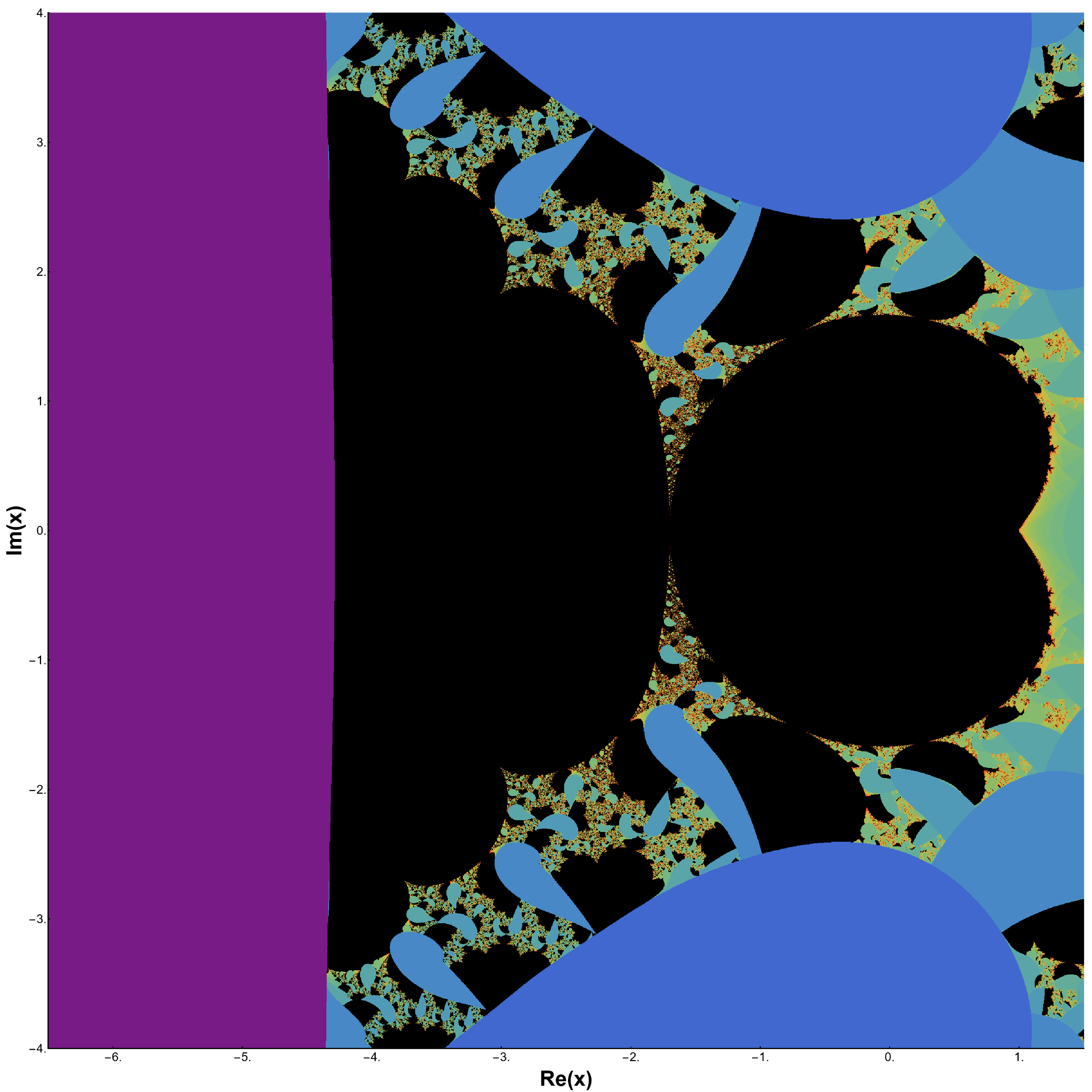} &
        \includegraphics[width=0.4\textwidth]{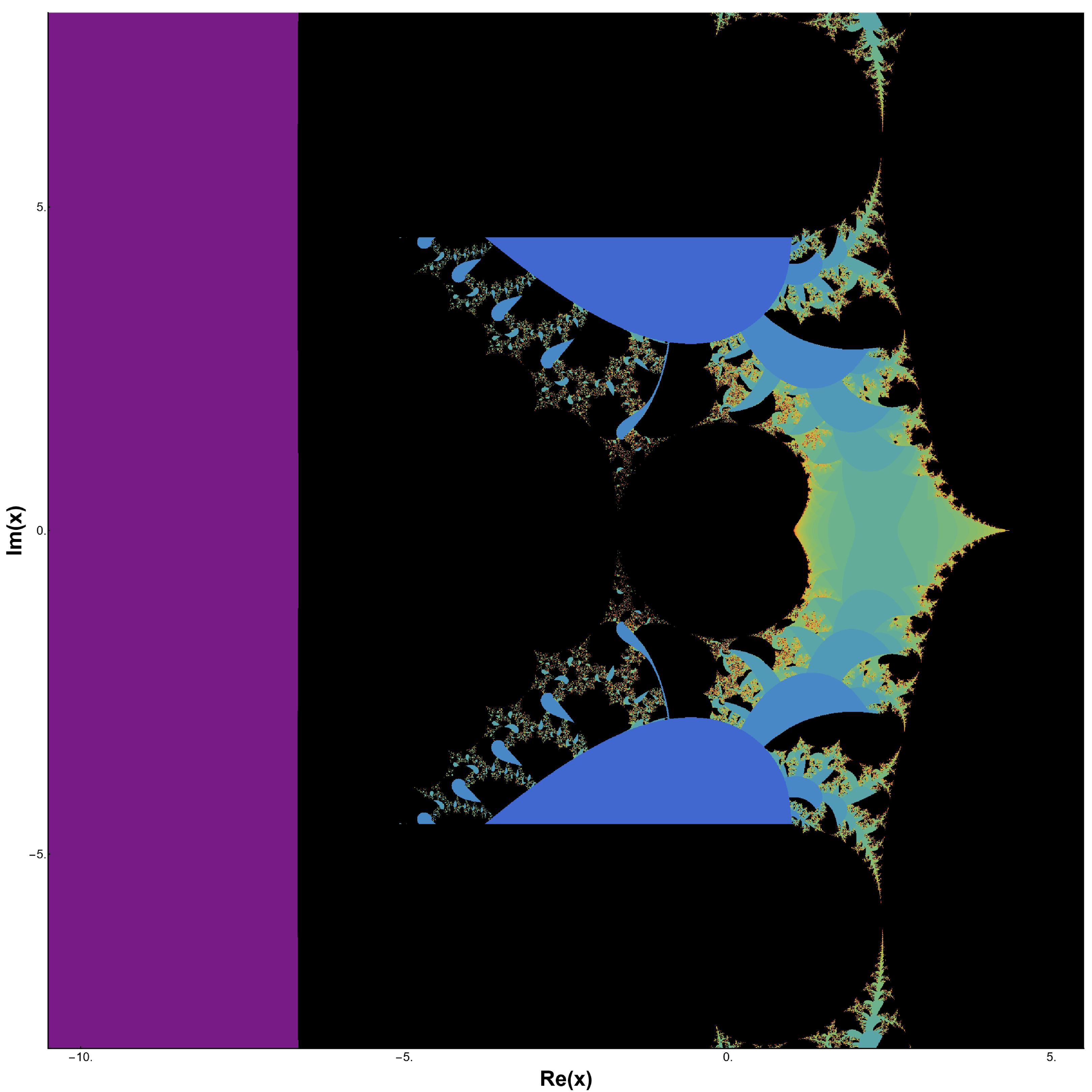} \\
        {\small (c) $r=20$} & {\small (d) $r=100$}
    \end{tabular}
    \caption{Finite-time threshold classifications for $z_0=c=0.5$ and
$N_{\max}=50$. Panels (a)--(c) use $Q(-2.5,8)$ with thresholds
$r=5,10,20$, respectively; panel (d) uses $Q(-2.5,16)$ with $r=100$.
Rainbow colors encode first-crossing iterations $1$--$50$; black denotes
no crossing through iteration $50$, medium gray denotes a formally undefined
orbit, and magenta denotes numerical failure.}
    \label{fig:escape-threshold-comparison}
    \label{fig:E-set_R5}
    \label{fig:E-set_R10}
    \label{fig:E-set_R20}
    \label{fig:E-set_R100}
\end{figure}

\FloatBarrier

\noindent\textbf{Observation.}
The four renderings visibly differ. Across panels, some regions receive
different finite-time classifications or first-crossing colors, and the
visible layered and oval-shaped structures change. Panels (a)--(c) use
identical settings except for $r$, so their differences arise from
changing the escape threshold. Panel (d) also uses a different viewport and
must therefore be interpreted separately.

\FloatBarrier
\subsection{Observation 2: Adjacent Finite-Time Outcome Regions}

Figure~\ref{fig:conver_diver} shows black finite-time non-crossing regions
directly adjacent to Rainbow first-crossing regions in the exponent plane.

\begin{figure}[htbp]
    \centering
    \includegraphics[width=0.45\textwidth]{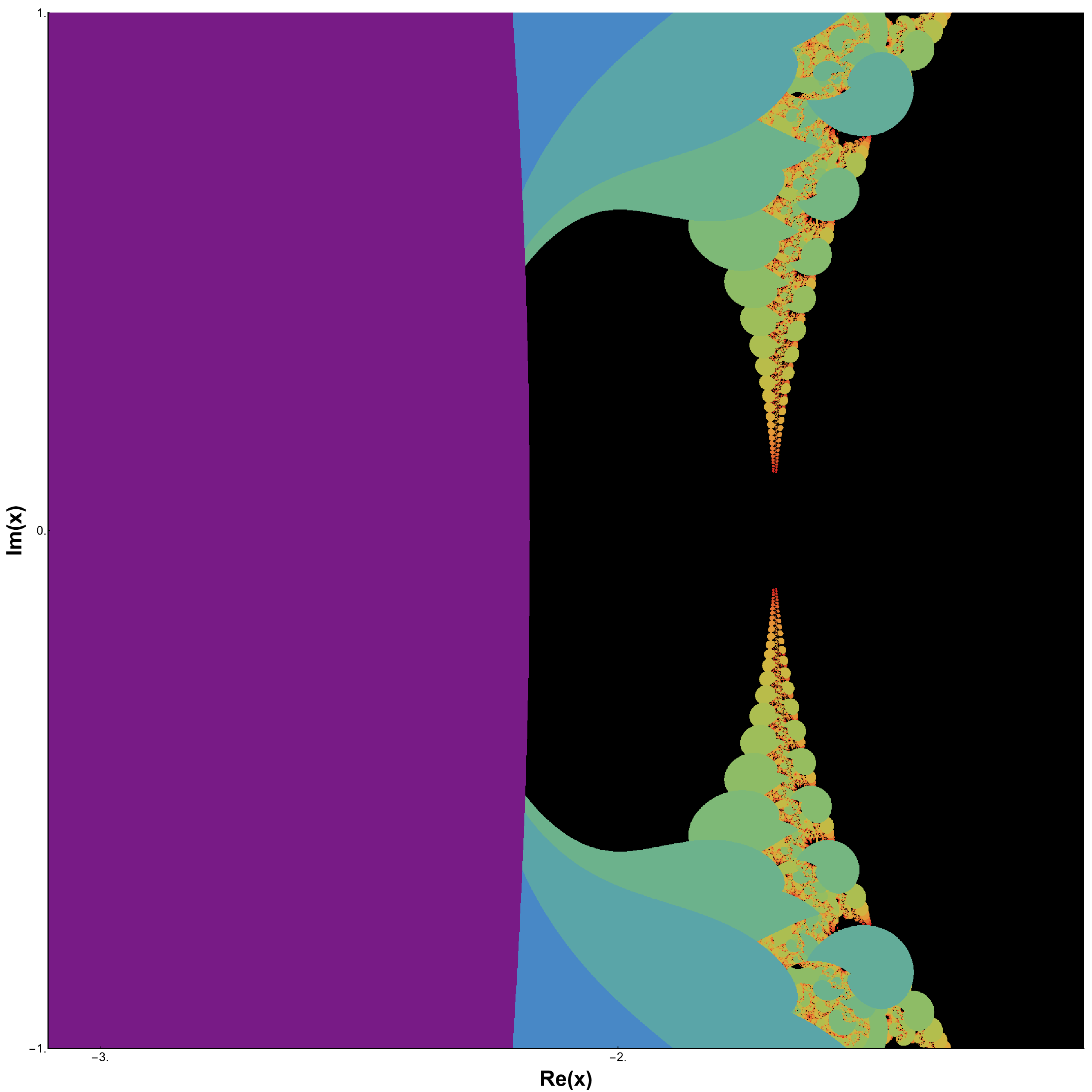}
    \caption{A finite-time color rendering of the exponent plane on the square
    viewport $[-3.1,-1.1]\times[-1,1]$, with $z_0=c=0.5$,
    $N_{\max}=50$, $r=5$, and a $2160\times2160$ grid. Rainbow colors encode
    first-crossing iterations $1$--$50$; black denotes no crossing through
    iteration $50$, medium gray denotes a formally undefined orbit, and magenta
    denotes numerical failure.}
    \label{fig:conver_diver}
\end{figure}
\FloatBarrier

\noindent\textbf{Observation.}
The numerical rendering shows sharp interfaces between black $\mathsf{NC}$
regions and Rainbow first-crossing regions, as well as between different
first-crossing colors. Neighboring displayed samples can therefore receive
different finite-time outcomes or crossing iterations. The $\mathsf{U}$ and
$\mathsf{F}$ statuses do not occur in this panel. No all-time calculation is
made, so these finite-time outcomes are not treated as verified E-Set
classifications. The visual adjacency is qualitatively reminiscent of
Theorem~\ref{thm:boundary-escape-time-instability}, which concerns the
specific family $E(e,0)$ at $x=1$ with the strict threshold $r=e$.

\FloatBarrier
\subsection{Observation 3: Multi-Region Pattern}

Figure~\ref{fig:trifecta} shows several Rainbow first-crossing colors meeting a
black $\mathsf{NC}$ region near a common apparent interface.

\begin{figure}[htbp]
    \centering
    \includegraphics[width=0.45\textwidth]{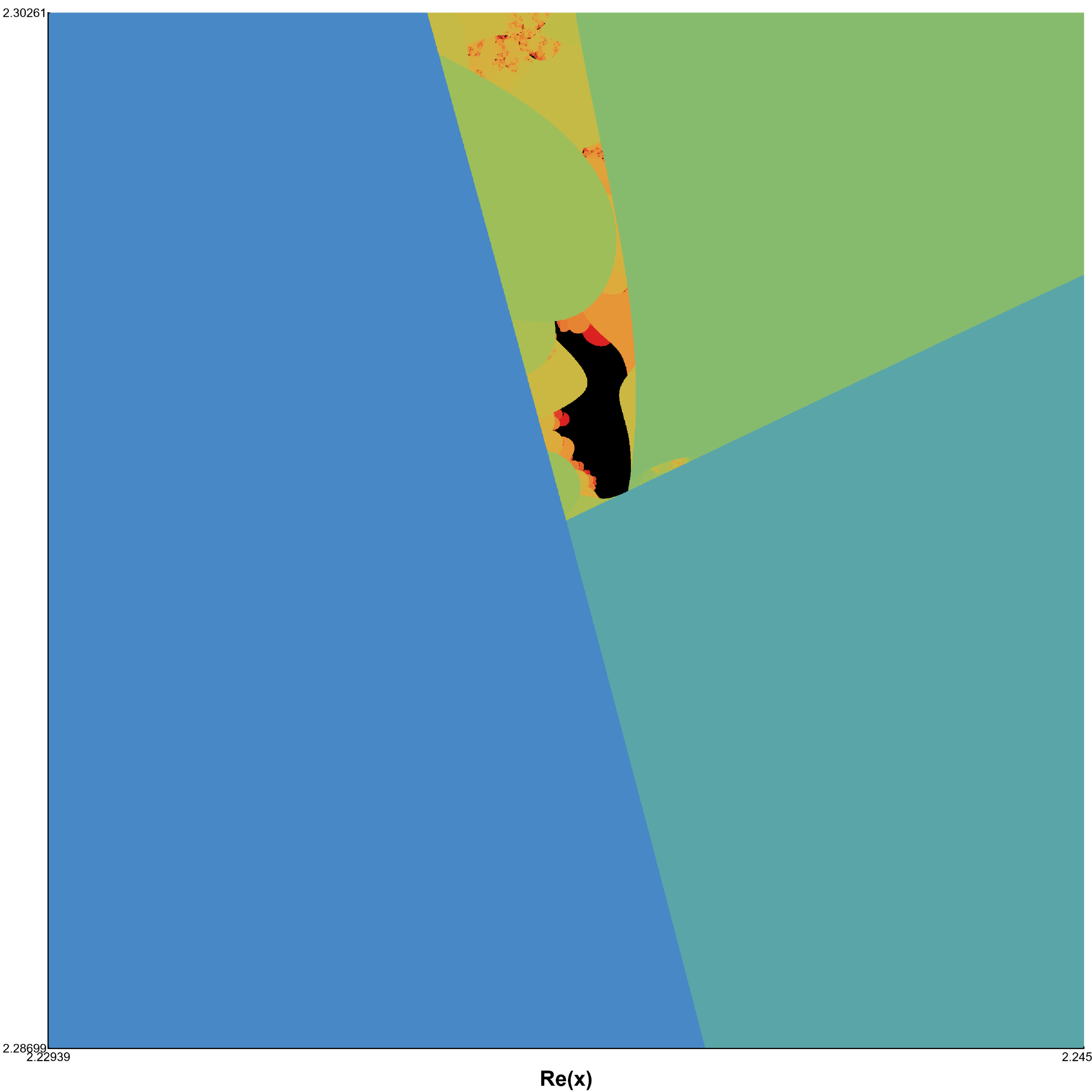}
    \caption{Finite-time color rendering of the exponent plane on
    $Q(2.2372+2.2948i,1/64)$, with $z_0=c=0.5$, $N_{\max}=50$, $r=5$,
    and a $2160\times2160$ grid. Rainbow colors
    encode first-crossing iterations $1$--$50$; black denotes no crossing
    through iteration $50$, medium gray denotes a formally undefined orbit,
    and magenta denotes numerical failure.}
    \label{fig:trifecta}
\end{figure}
\FloatBarrier

\noindent\textbf{Observation.}
The rendering contains a single apparent junction at which a black
$\mathsf{NC}$ region and three distinct first-crossing color regions meet.
Thus nearby sampled exponents exhibit no crossing through iteration $50$ or
first crossings at three different iterations. Figure~\ref{fig:trifecta}
therefore shows a finite-time pattern more complicated than a single smooth
interface between one first-crossing region and the non-crossing region.

\FloatBarrier
\subsection{Observation 4: Branch-Sensitive Term}

The numerical figures show asymmetric finite-time structures, but the exported
outcome data do not record whether any state-plane iterate lies on the negative
real axis.

\noindent\textbf{Observation.}
For $z_n\ne0$ and $x=a+ib$, the identity
\[
\ln|z_{n+1}-c|
=
a\ln|z_n|-b\operatorname{Arg}(z_n)
\]
isolates the branch-sensitive term. In
Theorem~\ref{thm:principal-argument-vertical-asymmetry}, an iterate lies exactly
on the negative real axis, so the assignment $\operatorname{Arg}(-\rho)=\pi$ enters the
proof. Because no analogous negative-real contact is recorded for the six
numerical illustrations, no branch-cut mechanism is inferred from them.

\subsection{Summary of the Finite-Time Illustrations}

The six illustrations record three finite-time patterns. First, the four
threshold-labeled renderings differ visibly; panels (a)--(c) form a controlled
threshold comparison, while panel (d) also uses a wider viewport. Second,
black regions representing no crossing through iteration $50$ occur next to
Rainbow first-crossing regions, and neighboring samples can have different first-crossing times.
Third, the images contain multi-region and asymmetric finite-time structures.

These finite-time classifications do not determine E-Set membership or
all-time divergence. The separate all-time questions are stated next.

\FloatBarrier

\section{Open Problems and Conjectures}
\label{sec:open-problems-conjectures}

The following conjectures formulate all-time questions suggested by the
theoretical results and numerical illustrations.

The first conjecture is motivated by
Theorem~\ref{thm:no-universal-radius-unit-circle}, which proves the failure of a
universal escape radius for $(z_0,c)=(i,0)$. It asks whether that failure
persists under small additive perturbations.

\begin{conjecture}[Local Additive Persistence of Escape-Radius Failure]
\label{conj:broad-no-universal-escape}
There exists $\delta>0$ such that, for every $c\in\C$ with $|c|<\delta$, the
E-Set $E(i,c)$ admits no finite universal escape radius. More precisely, for
every such $c$ and every $r>0$, there exist an exponent
$x=x(r,c)\in E(i,c)$ and an index $n\ge0$ such that
\[
|z_n|>r,
\]
while the orbit remains bounded.
\end{conjecture}

This conjecture asks for one uniform neighborhood of $c=0$ with the initial
value fixed at $z_0=i$, not for all E-Sets or for a generic parameter class.
The exponent and bounded orbit may depend on the prescribed radius and on $c$.

For the remaining conjectures, define
\[
\mathcal{W}(z_0,c)
=
\{x\in\C:\text{the orbit is well-defined for every }n\ge0\}
\]
and
\[
\mathcal{D}_{\infty}(z_0,c)
=
\{x\in\mathcal{W}(z_0,c):|z_n(x)|\to+\infty\}.
\]
For $R>0$ and $x\in\mathcal{W}(z_0,c)$, also set
\[
T_R(x)=\inf\{n\ge0:|z_n(x)|>R\},
\]
with the infimum of the empty set equal to $+\infty$. Unlike the finite-time
outcome $\mathcal{S}_{r,N_{\max}}$, this notation imposes no iteration cutoff.

Figure~\ref{fig:conver_diver} displays adjacent finite-time outcome regions
and motivates testing whether that adjacency persists as an all-time boundary
phenomenon. Divergent access is a separate conjectural requirement and is not
inferred from the rendering.

\begin{conjecture}[Divergently Accessible Boundary Continuum]
\label{conj:boundary_divergence}
For $z_0=c=1/2$, there exists a compact connected set
$\Gamma\subset\partial E(1/2,1/2)$ containing at least two points such that,
for every $x\in\Gamma$ and every $\delta>0$, there exists
$y\in\mathcal{D}_{\infty}(1/2,1/2)$ with
\[
0<|y-x|<\delta.
\]
\end{conjecture}

Thus every point of the conjectured continuum would be a true E-Set boundary
point approximable by distinct all-time divergent parameters. The conjecture
is limited to the existence of one such continuum for one parameter pair; it
does not assert divergent access at every boundary point or identify the
sampled interface in Figure~\ref{fig:conver_diver} with $\Gamma$.

The multi-region pattern in Figure~\ref{fig:trifecta} suggests testing whether
three all-time regimes can accumulate at one true boundary point. The finite
cutoff neither identifies such a point nor classifies the required orbits.

\begin{conjecture}[Multi-Regime Boundary Point]
\label{conj:multi_sector_boundary}
For $z_0=c=1/2$ and the strict threshold $R=5$, there exist a point
$x_*\in\partial E(1/2,1/2)$ and sequences of points
\[
b_k\to x_*,
\qquad
q_k\to x_*,
\qquad
d_k\to x_*,
\]
whose terms are different from $x_*$ and satisfy
\begin{enumerate}
    \item $b_k\in E(1/2,1/2)$ for every $k$;
    \item $q_k,d_k\in\mathcal{D}_{\infty}(1/2,1/2)$ for every $k$;
    \item $T_5(q_k)\le5$ for every $k$;
    \item $T_5(d_k)\to+\infty$ as $k\to\infty$.
\end{enumerate}
\end{conjecture}

The three regimes are bounded orbits, divergent orbits with a uniformly early
first crossing of $5$, and divergent orbits whose first crossing of $5$ is
arbitrarily delayed. Establishing these sequences requires independent
all-time boundedness and divergence estimates.

The last conjecture is motivated by
Theorem~\ref{thm:principal-argument-vertical-asymmetry}, which proves a
principal-argument-dependent vertical asymmetry for $c=0$ and an exact
negative-real iterate. It asks whether an analogous mechanism occurs in an
additive family. The numerical data record no branch-cut contacts and are not
evidence for this mechanism.

\begin{conjecture}[Additive Branch-Cut-Anchored Vertical Asymmetry]
\label{conj:generic_branch_cut}
There exist $z_0,c\in\C$ with $c\ne0$, a point
$x_*\in E(z_0,c)\cap\partial E(z_0,c)$, an index $m\ge1$, a sign
$\sigma\in\{-1,1\}$, and $\varepsilon_0>0$ such that
\[
z_m(x_*)\in(-\infty,0)
\]
and, for every $0<\varepsilon<\varepsilon_0$,
\[
x_*+\sigma i\varepsilon\in E(z_0,c),
\qquad
x_*-\sigma i\varepsilon\in\mathcal{D}_{\infty}(z_0,c).
\]
\end{conjecture}

This statement gives ``asymmetry'' an exact local meaning and anchors the
principal-argument term at a negative-real iterate. It asserts existence with
$c\ne0$, not genericity. Whether such triples form an open or otherwise
prevalent parameter family, and whether changing the argument convention reverses
or removes the asymmetry, remain separate open problems.

\section{Conclusion and Future Work}

This paper studied the exponent-coordinate fiber $E(z_0,c)$ of the full
boundedness locus $\mathcal{B}\subset\C^3$ for
\[
z_{n+1}=z_n^{\,x}+c.
\]
Together, $M(z_0,x)$, $J(c,x)$, and $E(z_0,c)$ represent the $c$-, $z_0$-,
and $x$-coordinate fiber families of $\mathcal{B}$, respectively.

The main theoretical results establish the failure of a finite universal escape
radius in several explicit exponent-parameter families. For the pure-power
slice $c=0$, we proved that no finite universal escape radius exists on the
unit-circle family $|z_0|=1$, $z_0\ne 1$, and on the full real axis
$z_0\in\R\setminus\{0,1\}$. We also proved an additive example with
$c\ne0$ in which both $z_0$ and $c$ have nonzero real and imaginary parts.

We also proved two parameter-specific phenomena. For $E(e,0)$, at the boundary
point $x=1$ and for the strict threshold $r=e$, the extended-valued escape-time
function $T_e$ is discontinuous. For $E(i,0)$, at $x=2$, the perturbations
$2+i\varepsilon$ and $2-i\varepsilon$ produce opposite orbit behavior for
every $\varepsilon>0$: the orbit corresponding to $2+i\varepsilon$ tends to
$0$ in modulus, whereas the orbit corresponding to $2-i\varepsilon$ diverges.
The latter proof uses the principal-argument assignment
$\operatorname{Arg}(-a)=\pi$ on the
negative real axis. Thus the principal-value definition of complex
exponentiation can enter the boundary structure of the exponent-parameter plane
as a dynamical input.

The six numerical illustrations record cross-panel variation, adjacency
between non-crossing and first-crossing regions, multi-region finite-time
patterns, and visible asymmetry. Table~\ref{tab:numerical-parameters} and the
figure captions record the computational parameters: panels (a)--(c) isolate
the effect of changing the threshold, whereas panel (d) also uses a wider
viewport. The adjacent-region and multi-region renderings motivate questions
about all-time behavior, but finite-time classifications alone do not determine
E-Set membership. Conjecture~\ref{conj:generic_branch_cut} is instead motivated
by the proved principal-argument asymmetry, since the numerical data record no
branch-cut contacts.

Several directions remain open for future work.

\subsection{Real-Coordinate Slices}

Let $\mathcal{B}\subset\C^3$ be the full boundedness locus defined in
Section~\ref{sec:definitions-framework}.
For an affine embedding $\Phi:\R^2\to\C^3$ obtained by fixing four of the six
real coordinates, define
\[
S_{\Phi}=\{u\in\R^2:\Phi(u)\in\mathcal{B}\}.
\]
If the two varying coordinates are the real and imaginary parts of $x$, $c$,
or $z_0$, then under the standard identification $\R^2\cong\C$, $S_{\Phi}$ is
respectively identified with an E-Set, an $M(z_0,x)$ set, or a $J(c,x)$ set as
defined in Section~\ref{sec:definitions-framework}. A slice that mixes
coordinates from different complex variables is instead a cross-section of
$\mathcal{B}$ and is not automatically a canonical dynamical object.

A concrete program is to specify $\Phi$ and determine whether $S_{\Phi}$ is
empty, bounded, connected, or disconnected, and how its boundary meets the loci
where a finite iterate equals $0$ or lies on the negative real axis. A first
technical target is a finite-iterate regularity lemma: for fixed $N$, determine
the continuity and local real-analyticity of
\[
u\longmapsto
\bigl(z_0(\Phi(u)),z_1(\Phi(u)),\ldots,z_N(\Phi(u))\bigr)
\]
on regions where the preceding iterates remain a positive distance from $0$
and the negative-real cut of the principal-value logarithm. The zero-contact
and cut-contact loci must be
analyzed separately, since the regularity conclusion can fail there.

For a numerical slice, record the embedding $\Phi$, viewport, grid, escape threshold,
iteration limit, and handling of undefined orbits. Such
data may identify candidate components and boundary points, but membership in
$S_{\Phi}$ still requires an all-time trapping argument; finite-time
non-escape is insufficient.

\subsection{Proof Programs for the Conjectures}

For fixed $(z_0,c)$, define the bounded-excursion invariant
\[
A(z_0,c)
=
\sup\{|z_n(x)|:x\in E(z_0,c),\ n\ge0\},
\]
with the supremum over the empty set taken to be $0$. Directly from the
definition,
\[
\begin{gathered}
E(z_0,c)\text{ admits a finite universal escape radius}\\
\Longleftrightarrow\quad A(z_0,c)<+\infty.
\end{gathered}
\]
Indeed, a number $r>0$ is universal exactly when $|z_n(x)|\le r$ for every
$x\in E(z_0,c)$ and every $n\ge0$. The escape-radius classification problem is
therefore to determine the pairs for which this invariant is finite. The
current theorems establish $A(z_0,c)=+\infty$ for the stated pure-power
families and the additive example with nonzero real and imaginary parts. The
trivial pure-power pairs satisfy
\[
A(0,0)=0,
\qquad
A(1,0)=1;
\]
no further pair is classified here. A proof of $A(z_0,c)=+\infty$ must allow the
exponent and bounded orbit to depend on the prescribed radius; a proof of
finiteness requires one bound valid for every $x\in E(z_0,c)$.

The four conjectures above give separate proof programs:
\begin{enumerate}
    \item For Conjecture~\ref{conj:broad-no-universal-escape}, begin with the
    exact-collapse construction for $E(i,0)$. The target is a number $\delta>0$
    and estimates uniform for $|c|<\delta$ that, for each prescribed radius,
    produce a large excursion followed by an all-time trapping pattern. The
    exponent may depend on $c$ and the radius, but the neighborhood size may not;
    zero encounters and every principal argument must remain controlled.

    \item For Conjecture~\ref{conj:boundary_divergence}, the target is an
    explicit candidate $\Gamma$ together with proofs of compactness and
    connectedness, two-sided approximation showing
    $\Gamma\subset\partial E(1/2,1/2)$, and distinct divergent parameters
    approaching every point of $\Gamma$. Certified finite-iterate enclosures may
    locate a candidate, but they cannot establish any of these all-time or
    topological conditions alone.

    \item For Conjecture~\ref{conj:multi_sector_boundary}, one must construct the
    three stated sequences and prove their all-time behavior. Certified
    finite-iterate enclosures may locate candidates and bound $T_5$, but they
    must be combined with independent trapping and divergence estimates.

    \item For Conjecture~\ref{conj:generic_branch_cut}, one must first produce an
    additive orbit with the specified negative-real iterate and then derive
    uniform one-sided boundedness and divergence estimates for all sufficiently
    small vertical perturbations. Comparing the branches $\log_{\alpha,k}$ on
    specified cut domains is a separate question and is not implied by the
    principal-argument conjecture.
\end{enumerate}

These programs keep $E(z_0,c)$, $\mathcal{D}_{\infty}(z_0,c)$, and the set of
parameters with undefined orbits distinct. In particular, a point outside an
E-Set need not have an orbit that tends to infinity. Questions about
connectedness, symmetry, or local self-similarity should likewise be posed for
a specified E-Set or slice and with a stated topological or scaling property,
rather than inferred from a finite rendering.

\subsection{Higher-Dimensional Parameter Spaces}

For fixed $z_0\in\C$, separate the all-time well-definedness locus from the
boundedness locus by setting
\[
\mathcal{W}_{z_0}
=
\{(c,x)\in\C^2:
\text{the orbit is well-defined for every }n\ge0\}
\]
and
\[
\mathcal{P}_{z_0}
=
\{(c,x)\in\mathcal{W}_{z_0}:
\sup_{n\ge0}|z_n|<+\infty\}.
\]
For fixed $c$, the $x$-fiber of $\mathcal{P}_{z_0}$ is $E(z_0,c)$; for fixed
$x$, its $c$-fiber is $M(z_0,x)$. Precise higher-dimensional questions include
the topology of these fibers, their variation with the fixed parameter, and the
projections of $\mathcal{P}_{z_0}$ onto the $c$- and $x$-planes.

The case $z_0=0$ is fixed exactly by the zero convention. The first iterate is
defined if and only if $\operatorname{Re}(x)>0$; once this holds, every later
zero is also covered by the same convention and every nonzero power is defined
by the principal-value logarithm. Hence
\[
\mathcal{W}_0
=
\{(c,x)\in\C^2:\operatorname{Re}(x)>0\},
\]
and
\[
\mathcal{P}_0
=
\{(c,x)\in\C^2:
\operatorname{Re}(x)>0
\text{ and the orbit is bounded}\}.
\]
The complementary half-space must be classified as undefined, not escaped.

Dynamically defined slices can be specified by an orbit relation
\[
z_p=z_q,
\qquad
0\le p<q,
\]
provided every preceding iterate is well-defined, or by requiring a specified
iterate to equal $0$ or to lie on the negative real axis. A concrete program is
to determine the regular portions and singular zero- or cut-contact portions of
these relation loci, then study their intersections with $\mathcal{P}_{z_0}$.
Two-dimensional fibers and certified finite computations may supply candidates,
but projections of finite-time classifications do not establish boundedness in
the full four-real-dimensional locus.

\bibliographystyle{unsrt}
\bibliography{references}

\end{document}